\long\gdef\@gobble#1{}}
\theoremstyle{plain}
\newtheorem{thm}[subsection]{Theorem}
\theoremstyle{definition} 
\newtheorem{lem}[subsection]{Lemma}
\newtheorem{cor}[subsection]{Corollary}
\newtheorem{pro}[subsection]{Proposition}
\newtheorem{exa}[subsection]{Example}
\newtheorem{defn}[subsection]{Definition}
\numberwithin{equation}{section}
\title{The Gelfand-Tsetlin Realisation of Simple Modules and Monomial Bases}
\author{Central European University\\
Amadou Keita (keita\_amadou@student.ceu.edu)
}
\date{{\small December 2018}}
\begin{document}
\pagestyle{empty}
\maketitle
%
\pagenumbering{roman}

\newpage
%
\pagenumbering{arabic}
\pagestyle{myheadings}

\chapter*{Abstract} 
\addcontentsline{toc}{chapter}{Abstract}
The most famous simple Lie algebra is $sl_n$ (the $n \times n$ matrices with trace equals $0$).
The representation theory for $sl_n$ has been one of the most important research areas for the past hundred years and within their the simple finite-dimensional modules have become very important. They are classified and Gelfand and Tsetlin actually gave an explicit construction of a basis for every simple module. We try to understand their work, extend it by providing theorems and proofs, and construct monomial bases of the simple module.
%
%
%
%


\chapter{Introduction}\label{1}
Lie algebras are interesting because they form a central mathematical crossroad, which relates to a host of important areas such as group theory, number theory, algebraic geometry, differential geometry, topology, particle physics and strings. Therefore, knowledge of Lie algebras is critical in making significant progress in many aspects of these related fields.

In this project, we work with finite-dimensional modules and hence finite-dimensional representation of $sl_n$. This means for $g\in sl_n$, $\exists$ a matrix $G$ of order $\dim R$ defined in such a way that 
\begin{align*}
g \longrightarrow G \text{ and }  f \longrightarrow F \Rightarrow \lambda g + \mu f \longrightarrow \lambda G + \mu F \text{ and } [g,f]\longrightarrow [G,F].
\end{align*}
We will work with partitions. We choose integers $m_1$, $m_2$, $\cdots$, $m_n$ such that 
\begin{align*}
m_1\geq m_2\geq \cdots \geq m_n
\end{align*}
is satisfied. We see that these partitions are quite important because they appear to be the core in constructing representations. 

Also, we can calculate the possible number of entries of the index set as 
\begin{align}
N=\frac{n(n-1)}{2}.
\end{align}
Let $e_{i,j}$ be a matrix of order $\dim R$ which has $1$ at the intersection of the $i^{\text{th}}$ row and the $j^{\text{th}}$ column and zeros in all other places. If $i=j$, then 
\begin{align*}
e_{i,j}=e_{i,i}-e_{i+1,i+1}.
\end{align*}
Also, let $E_{i,j}$ be the matrix of order
\begin{align}\label{1.0.2}
\prod_{1\leq i\leq j\leq n-1}\frac{m_i-m_{j+1}+i-j+1}{i-j+1}=\dim R,
\end{align} 
 which under our representation corresponds to elements $e_{i,j}\in sl_n$. We see that each matrix forms a linear combination of $e_{i,j}$; that is 
 \begin{align*}
 E_{i,j}=\sum_{i,j}a_{i,j}e_{i,j}
 \end{align*}
for some $a_{i,j}$. Therefore, the set $E_{i,j}$ distinctly defines our representation. We can find all representations by explicitly describing all linear transformations $E_{i,j}$. 

The quest for irreducible representations of special linear algebra $sl_n$ can be reformulated simpler: we need matrices $E_{i,j}$ of order $\dim R$ in Equation \eqref{1.0.2} satisfying the following bracket relations:
\begin{align*}
[E_{i,j},E_{j,l}]&=E_{i,l} \text{ when } i\neq l, \\
[E_{i,j},E_{j,i}]&=E_{i,i}-E_{j,j},\\ 
[E_{i_1,j_1},E_{i_2,j_2}]&=0 \text{ when } j_1\neq i_2 \text{ and }i_1\neq j_2. 
\end{align*}
The system $E_{i,j}$ is required to have no invariant subspaces (that is to be irreducible).

The representation theory of $sl_n$ has a unique nature in choosing a partition. For the classification of simple finite dimensional modules, we set the last choice $m_n=0$ in the partition. This controls differences between subsequent choices in a partition. With a given module, we can set a parametrisation of the partition and then construct all bases vectors. If we construct all bases vectors, we will know a highest basis vector. It is called the highest weight vector. 

If $S\neq \{0\}$ is a submodule of $R$, it has a highest weight vector. This is obvious for the fact that $S$ is finite dimensional and so one of its bases vectors must be highest. Consequently, the presence of a highest weight vector in the submodule $S$ implies $S$ is all of $R$. Hence we have Theorem \ref{3.3.2}. 

In Chapter \ref{2}, we provide the literature review. We discuss preliminaries in Chapter \ref{3} including important definitions in Section \ref{3.1}, the module $R$ in Section \ref{3.2}, and the module structure in Section \ref{3.3}. Then, we discuss weight vectors in $R$ in Section \ref{3.4}. In Chapter \ref{5}, we discuss our results obtained. We prove that $sl_n-$module is simple in Section \ref{5.1} and give monomial basis in Section \ref{5.2}. Finally, we conclude on our findings in Chapter \ref{6}.  
\chapter{Literature review}\label{2}
A comprehensive theory of infinitesimal transformations was first given by a Norwegian mathematician, Sophus Lie (1842-1899). This was at the heart of his work, on what are now called Lie groups and their accompanying Lie algebras; and the identification of their role in geometry and especially the theory of differential equations. The properties of an abstract Lie algebra are exactly those definitive of infinitesimal transformations, just as the axioms of group theory embody symmetry. The term "Lie algebra'' was introduced in 1934 by  a German mathematician, Hermann Weyl, for what had until then been known as the algebra of infinitesimal transformations of a Lie group. In the year $1888$, Sophus Lie and Freidrich Engel published their work on Infinitesimal Transformations $1$ \citep{engel1888theorie}; two years later, they published the Infinitesimal Transformations $2$ \citep{lie1890theorie} and in the year $1893$, they finally published the Infinitesimal Transformations $3$ \citep{lie1893theorie}. This series forms a very good foundation in the subject. 

For the past century, mathematicians are trying to understand the works of Sophus Lie and then build on that foundation. Some of his findings have no theorems and proofs but they are explicit constructions that we can work with. In the year $1950$, I. M. Gelfand and M. L. Tsetlin gave an explicit construction of a basis for every simple module. In their work, they gave all the irreducible representations of general linear algebra ($gl_n$) but without theorems \citep{gelfand1950finite}. In the year $2015$, V. Futorny, D. Grantcharov and L. E. Ramirez provided a classification and explicit bases of tableaux of all irreducible generic Gelfand-Tsetlin modules for the Lie algebra $gl_n$ \citep{futorny2015irreducible}. In February $2016$, V. Futorny, D. Grantcharov, and L. E. Ramirez in their paper initiated the systematic study of a large class of non-generic Gelfand-Tsetlin modules - the class of $1-$singular Gelfand-Tsetlin modules. An explicit tableaux realization and the action of $gl_n$ on these modules is provided using a new construction which they call derivative tableaux. Their construction of $1-$singular modules provides a large family of new irreducible Gelfand-Tsetlin modules of $gl_n$, and is a part of the classification of all such irreducible modules for $n = 3$ \citep{futorny2016singular}.

In this thesis, we will show that the Gelfand-Tsetlin constructions given in the year $1950$ \cite{gelfand1950finite} form all the irreducible representations of special linear algebra $sl_n$. We will show that $sl_n-$module is simple and also construct monomial basis from these modules.

\chapter{Preliminaries} \label{3}
\section{Definitions}\label{3.1} 

\subsection{Definitions under representation}
\begin{defn}[Representation \citep{carter2005lie}] \hfill\newline
Suppose $L$ is a Lie algebra and let $x,y \in L$. The operation
$$\rho :L\longrightarrow \text{End}(R) $$
$$\rho([x,y])=[\rho (x),\rho (y)]\equiv \rho (x)\rho (y)-\rho (y)\rho (x).$$
is a Lie algebra representation. The vector space $R$ is the representation space. The bracket $[\cdot ,\cdot]$ is bilinear and also an endomorphism. That means
$$[\cdot ,\cdot]:\text{End}(R)\times \text{End}(R) \longrightarrow \text{End}(R).$$
There are many types of representation but each of them is either reducible or irreducible.
\end{defn}
\begin{defn}[Reducible Representation \citep{humphreys1972introduction}] \hfill\newline
A representation 
$$\rho :L\longrightarrow \text{End}(R) $$
is called reducible if $\exists$ a sub vector space $U \subset R$ such that 
$$\forall x\in L, \ \forall u\in U, \ \rho (x)u \in U.$$ So, the representation map $\rho$ restricts  to 
$$\rho \mid_u\cdot L\longrightarrow \text{End}(U).$$
\end{defn}
\begin{defn}[Irreducible Representation \citep{humphreys1972introduction}] \hfill\newline
A representation 
$$\rho :L\longrightarrow \text{End}(R) $$
is called irreducible if $\exists$ no sub vector space $U$ of $R$, $U\neq R$ such that $\forall x\in L$, $\forall u\in U$, $\rho (x)u \in U$. This implies that every subspace $U$ is actually all of $R$. 
\end{defn}
Understanding representations (especially irreducible representation) is necessary for understanding this project work. Below is an example of an irreducible representation.
\begin{exa}
Let $L=sl(2,\mathbb{C})$. The basis of $L$ can be written as 
$$E=\left(\begin{matrix}
0 & 1 \\
0 & 0
\end{matrix}\right),\hspace{.5cm} F=\left(\begin{matrix}
0 & 0 \\
1 & 0
\end{matrix}\right) \text{ and }   H=H_{1,1}-H_{2,2}=\left(\begin{matrix}
1 & 0 \\
0 & -1
\end{matrix}\right).$$
Suppose $x,y\in L.$ A representation 
$$\rho :L\longrightarrow \text{End}(\mathbb{C}^2) $$
$$\rho([x,y])=[\rho (x),\rho (y)]\equiv \rho (x)\rho (y)-\rho (y)\rho (x).$$
Suppose $\rho = id$. This implies that $\rho(x)=x$. Then
$$\rho([x,y])= x\cdot y- y\cdot x.$$
Also
$$\rho (H)=\left(\begin{matrix}
1 & 0 \\
0 & -1
\end{matrix}\right),\hspace{.5cm}
\rho (E)=\left(\begin{matrix}
0 & 1 \\
0 & 0
\end{matrix}\right) \text{ and }
\rho (F)=\left(\begin{matrix}
0 & 0 \\
1 & 0
\end{matrix}\right).$$
The bracket of any two of the above invariants operates as
\begin{eqnarray*}
\rho ([H,E]) &=& 2\rho(E),\\
\rho ([H,F]) &=& 2\rho(F),\\
\rho ([E,F]) &=& \rho(H).
\end{eqnarray*}
We know 
\begin{eqnarray*}
[\rho(H),\rho(E)] &=& 2\rho(E)\\
&=& \rho(2E) \text{ linearity property}\\
&=& \rho ([H,E]).
\end{eqnarray*}
So 
$$\text{End}(\mathbb{C}^2)\supseteq \rho (sl(2,\mathbb{C}))=\left\{\left(\begin{matrix}
\alpha & \beta \\
\gamma & \delta 
\end{matrix}\right)\Bigg| \alpha + \delta = 0\right\}.$$
Suppose $v\in\mathbb{C}^2$ and $v=\lambda_1e_1+\lambda_2e_2$, where $e_1,e_2$ are canonical basis and $\lambda_1,\lambda_2$ are coefficients. Let $\lambda_1\neq 0 \text{ and } \lambda_2=0$. Then
$$F\cdot v=\lambda_1e_2,$$
$$E\cdot(F\cdot v)=\lambda_1e_1.$$
Also, let $\lambda_1= 0 \text{ and } \lambda_2\neq 0$. Then
$$E\cdot v=\lambda_2e_1,$$
$$F\cdot(E\cdot v)=\lambda_2e_2.$$
Therefore, $e_1,e_2\in S \Rightarrow S=\mathbb{C}$. Since $\nexists$ an invariant subspace, $\rho$ is an irreducible representation.
\end{exa}
\subsection{Definitions under weight} 
\begin{defn}[Weight Space \citep{carter2005lie}] \hfill\newline
Let us explore the anatomy of weight, weight space, weight vector and highest weight vector as they are crucial in the work of this project.

A Lie algebra 
$$sl_n = u^-\oplus h \oplus u^+,$$
where $u^-$ is all lower triangular matrices, $h$ all diagonal matrices and $u^+$ is all upper triangular matrices. If $R$ is a finite dimensional $sl_n-$module, then $H\in h$ acts on $R$ such that 
$$R=H_1\cdot \xi_1+\cdots + H_n\cdot \xi_n=\bigoplus_{{\lambda}} R_{\lambda}, $$
where $\lambda$ runs over $H^*$(a dual) and 
$$R_{\lambda}=\left\{r\in R \Bigg| H \cdot \xi = \lambda (H)\xi \hspace{.5cm} \forall H \in h\right\}.$$
The weight spaces $R_{\lambda}$ are infinitely many and different from zero when $R$ is infinite dimensional. $R_{\lambda}$ is called a weight space, $\xi$ a weight vector and we called $\lambda$ a weight of $R$.

A highest weight vector (maximal vectors) in $sl_n-$module is a non-zero weight vector $\beta$ in weight space $R_{\lambda}$ annihilated by the action of all upper triangular matrices ($E$). We will prove in this project that a highest weight vector is indeed maximal and hence a generator. 
\end{defn}
\begin{defn}[Algebra] \hfill\newline
An algebra is a ring $A$ which is a vector space such that
\begin{eqnarray*}
\mathbb{C} &\longrightarrow & A\\
a & \longmapsto & a\cdot 1
\end{eqnarray*}
 is a ring homomorphism.
 \end{defn}
\begin{defn}[Monomial Basis] \hfill\newline
Let $M$ be a simple module and $v$ be a highest weight vector, then $M$ is generated by $v$ through applying iterative lower triangular matrices on $v$. We can view this iterated applying as being a product in some algebra (namely the universal enveloping algebra). We have the fixed basis $F_{i,j}$ and consider monomials in these $F_{i,j}$ only.

A given set $B$ of monomials is called monomial basis of $M$ if 
$$
\{\underline{F}^{b}.v \mid b \in B \}
$$
is a basis of $M$.

In this project, we will prove the existence of monomial basis and give some examples. 
\end{defn}
\section{The module R}\label{3.2}
$R$ is a vector space with bases $\xi$ \citep{gelfand1950finite}. These bases depend on the choice of integer partition
\begin{align*}
m_1, m_2 , \cdots , m_n \text{ with }(m_1\geq m_2 \geq \cdots \geq m_n).
\end{align*}
\begin{align}
\xi = \left(\begin{matrix}
p_{1,i}, & i=1,...,n-1 \\
p_{2,i}, & i=1,...,n-2 \\
\ddots & \ddots \\
p_{j,i},& \left\{_{0\leq i \leq n-j}^{1\leq j\leq n-1}\right.
\end{matrix}\right).
\end{align}
In order to understand this basis vector quite well, let us consider rows $(i-1), i\text{ and } (i+1)$ and entry $p_{i,j}$ in $\xi$. For all $p_{i,j}$, if $j<1$ or $j>n-i$, then $p_{i,j}=\emptyset$. Otherwise, the relations of the three rows and specifically the entry $p_{i,j}$ are 
\begin{align*}
\begin{cases}
p_{i-1,j}\geq p_{i,j} \geq p_{i-1,j+1},\\
p_{i+1,j-1}\geq p_{i,j} \geq p_{i+1,j},\\
p_{0,j}:= m_j.
\end{cases}
\end{align*} 
Below is a pictorial representation of $p_{i,j}$.
\begin{figure}[H]
\centering
\includegraphics[scale=.5]{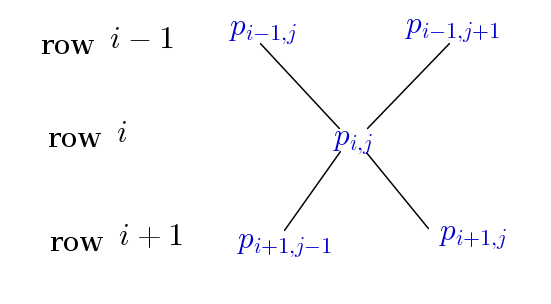}
\caption{Pictogram of entry $p_{i,j}$ in $\xi$.}\label{pic}
\end{figure}
Let $m_1=1$, $m_2=1$ and $m_3=0$. All possible bases from this partition are 
$$\left(\begin{matrix}
1 && 1 \\
 & 1 &
 \end{matrix}\right),\hspace{.2cm}\left(\begin{matrix}
1 && 0 \\
 & 1 &
 \end{matrix}\right),\hspace{.2cm}\left(\begin{matrix}
1 && 0 \\
 & 0 &
 \end{matrix}\right).$$
 
\section{The module structure on R}\label{3.3}
Our representation space $R$ is a $sl_n-$module.
Although this is true, we will not prove it. It is a $sl_n-$module via actions of upper triangular matrices, lower triangular matrices and the diagonal matrices on $\xi$ \citep{gelfand1950finite}. 

For upper triangular matrices in general, as far as the action takes place, the coefficients will never be zero. Let $m_{pq}-p=l_{pq}$ and
\begin{align}
a_{k-1,k}^j=\left[(-1)^{k-1}\frac{\prod_{i=1}^k\left(l_{i,k}-l_{j,k-1}\right)\prod_{i=1}^{k-2}\left(l_{i,k-2}-l_{j,k-1}-1\right)}{\prod_{i\neq j}\left(l_{i,k-1}-l_{j,k-1}\right)\left(l_{i,k-1}-l_{j,k-1}-1\right)}\right]^{\frac{1}{2}}.
\end{align} 
Suppose $\xi_{k-1,k}^j$ is the pattern obtained from $\xi$ by replacing $m_{i,k-1}$ with $m_{i,k-1}+1$. The upper triangular matrix $E_{k-1,k}$ acts on $\xi$ as
\begin{align}\label{3.0.3}
 E_{k-1,k}(\xi)= \sum_{j}a_{k-1,k}^j(\xi_{k-1,k}^j).\end{align}
For a $3\times 3$ matrix, the action of $E_{i,j}$ on $\xi$ raises the $i^{th}$ row in the basis $\xi$ by $1$ on every entry in that row accordingly. The formulas below give explicit descriptions for $n=3$:
\begin{align*}
E_{1,2}\left(\begin{matrix}
p_1 & & p_2 \\
 & q & 
\end{matrix}\right) &= \sqrt{(p_1-q)(q-p_2+1)}\left(\begin{matrix}
p_1 & & p_2 \\
 & q+1 & 
\end{matrix}\right)\\\\
E_{2,3}\left(\begin{matrix}
p_1 & & p_2 \\
 & q & 
\end{matrix}\right) &= \sqrt{\frac{(m_1-p_1)(m_2-p_1-1)(m_3-p_1-2)(p_1-q+1)}{(p_1-p_2+2)(p_1-p_2+1)}}\left(\begin{matrix}
p_1+1 & & p_2 \\
 & q & 
\end{matrix}\right)\\\\
&  + \sqrt{\frac{(m_1-p_2+1)(m_2-p_2)(m_3-p_2-1)(p_2-q)}{(p_1-p_2+1)(p_1-p_2)}}\left(\begin{matrix}
p_1 & & p_2+1 \\
 & q & 
\end{matrix}\right).
\end{align*}
Note that 
\begin{align*}
E_{1,3}=[E_{1,2},E_{2,3}].
\end{align*}
So it is generated by $E_{i,i+1}$. The coefficients as far as action can take place are different from zero. This can be seen in the following cases: we consider coefficients from the actions of $E_{1,2}$ and $E_{2,3}$ in our analysis. We know $p_1\geq q$ and $q\longmapsto q+1$ implying $(p_1-q)\neq 0$. $q\geq p_2$ so $(q-p_2+1)\neq 0$. In the same way, the coefficients from the action of $E_{2,3}$ has $m_1\geq p_1$ and $p_1\longmapsto p_1+1$ implying $m_1>p_1$ so $(m_1-p_1)\neq 0$. Also, $(m_2-p_2)\neq 0$ since $m_2\geq p_2$ and $p_2\longmapsto p_2+1$ implying $m_2>p_2$. Since $m_2>p_2$ and $m_1\geq m_2$, then $(m_1-p_2+1)\neq 0$. Now, $m_1>p_1\geq m_2>p_2\geq m_3$, $\Rightarrow (m_2-p_1-1)\neq 0$. Since $p_1>m_3$, $\Rightarrow (m_3-p_1-2)\neq
 0$. Also, $p_2\geq m_3$, $\Rightarrow (m_3-p_2-1)\neq 0$. For $q\geq p_2$, and $p_2\longmapsto p_2+1$, then $q>p_2$. So $(p_2-q)\neq 0$. We know $p_1\geq q> p_2$, $\Rightarrow (p_1-q+1)\neq 0$. $p_1>p_2$, $(p_1-p_2+2)\neq 0$, $(p_1-p_2+1)\neq 0$ and $(p_1-p_2)\neq 0$. 

The action of $F_{i,j}$ on $\xi$ reduces the entries of the $i^{th}$ row in $\xi$ by $1$ accordingly. This is done in such a way that rules governing the size of entries are conserved. In a general case, let $m_{pq}-p=l_{pq}$ and
\begin{align}
b_{k,k-1}^j=\left[(-1)^{k-1}\frac{\prod_{i=1}^k\left(l_{i,k}-l_{j,k-1}+1\right)\prod_{i=1}^{k-2}\left(l_{i,k-2}-l_{j,k-1}\right)}{\prod_{i\neq j}\left(l_{i,k-1}-l_{j,k-1}+1\right)\left(l_{i,k-1}-l_{j,k-1}\right)}\right]^{\frac{1}{2}}.
\end{align} 
Suppose $\bar{\xi}_{k,k-1}^j$ is the pattern obtained from $\xi$ by replacing $m_{i,k-1}$ with $m_{i,k-1}-1$. The lower triangular matrix $F_{k,k-1}$ acts on $(\xi)$ as
\begin{align}\label{3.0.5}
F_{k,k-1}(\xi)= \sum_{j}b_{k,k-1}^j(\bar{\xi}_{k,k-1}^j).
\end{align}
The formulas for $n=3$ are as follows: 
\begin{eqnarray*}
F_{2,1}\left(\begin{matrix}
p_1 & & p_2 \\
 & q & 
\end{matrix}\right) &=& \sqrt{(p_1-q+1)(q-p_2)}\left(\begin{matrix}
p_1 & & p_2 \\
 & q-1 & 
\end{matrix}\right)\\\\
F_{3,2}\left(\begin{matrix}
p_1 & & p_2 \\
 & q & 
\end{matrix}\right) &=& \sqrt{\frac{(m_1-p_1+1)(m_2-p_1)(m_3-p_1-1)(p_1-q)}{(p_1-p_2+1)(p_1-p_2)}}\left(\begin{matrix}
p_1-1 & & p_2 \\
 & q & 
\end{matrix}\right)\\\\
& & + \sqrt{\frac{(m_1-p_2+2)(m_2-p_2+1)(m_3-p_2)(p_2-q-1)}{(p_1-p_2+2)(p_1-p_2+1)}}\left(\begin{matrix}
p_1 & & p_2-1 \\
 & q & 
\end{matrix}\right).
\end{eqnarray*}
We know that 
\begin{align*}
F_{3,1}=[F_{3,2},F_{2,1}].
\end{align*}
So, $F_{i+1,i}$ generates all $F_{i,j}$ and other actions can be computed using the Lie bracket operation. The coefficients as far as action can take place are different from zero. This is seen in the case that $n=3$. We consider coefficients from the actions of $F_{2,1}$ and $F_{3,2}$ in our analysis. We know $p_1\geq q$, so $(p_1-q+1)\neq 0$. $q\geq p_2$ and $q\longmapsto q-1$, so $(q-p_2)\neq 0$ and $(p_2-q-1)\neq 0$. In the same way, $m_1\geq p_1$ and $p_1\longmapsto p_1-1$ implying $m_1>p_1-1$. So $(m_1-p_1+1)\neq 0$. We know $(m_2-p_1)\neq 0$ since $p_1\geq m_2$ and $p_1-1\geq m_2$ implying $p_1>m_2$. Since $m_1\geq p_1>m_2\geq p_2$, then $m_1> p_2$. So $(m_1-p_2+2)\neq 0$. Now, $m_2\geq p_2\geq m_2$,  $\Rightarrow (m_2-p_2+1)\neq 0$. Since $p_1>p_2\geq m_3$, $\Rightarrow (m_3-p_1-1)\neq
 0$. Also, $p_1\geq q$ and $p_1\longmapsto p_1-1$, $\Rightarrow (p_1-1)\neq q$. So, $p_1-q \neq 0$.  $p_1> p_2$, so $(p_1-p_2+2)\neq 0$, $(p_1-p_2+1)\neq 0$, $(p_1-p_2)\neq 0$. Since $p_2\geq m_3$ and $p_2\longmapsto p_2-1$, then $p_2>m_3$. So $(m_3-p_2)\neq 0$. The coefficients will never be zero when an action is possible.

The diagonal matrices can also be generated by $E_{i,i+1}$ and $F_{i+1,i}$. Some coefficients from the action of $H_{i,i}$ can be zero but not all coefficients. In general 
\begin{align}\label{3.0.6}
H_{i,i}(\xi)=\left(\sum_{i=1}^k m_{i,k}-\sum_{i=1}^{k-1} m_{i,k-1} \right)\left(\xi\right)
\end{align}
where 
$$\left(\sum_{i=1}^k m_{i,k}-\sum_{i=1}^{k-1} m_{i,k-1} \right)$$
is the coefficient of $\xi$.

The action of diagonal matrices ($H_{i,i}$) goes as follows when $n=3$:
\begin{eqnarray*}
H_{1,1}\left(\begin{matrix}
p_1 & & p_2 \\
 & q & 
\end{matrix}\right) &=& q\left(\begin{matrix}
p_1 & & p_2 \\
 & q & 
\end{matrix}\right)\\\\
H_{2,2}\left(\begin{matrix}
p_1 & & p_2 \\
 & q & 
\end{matrix}\right) &=& (p_1+p_2-q)\left(\begin{matrix}
p_1 & & p_2 \\
 & q & 
\end{matrix}\right)\\\\
H_{3,3}\left(\begin{matrix}
p_1 & & p_2 \\
 & q & 
\end{matrix}\right) &=& (m_1+m_2+m_3-p_1-p_2)\left(\begin{matrix}
p_1 & & p_2 \\
 & q & 
\end{matrix}\right).
\end{eqnarray*}

\begin{exa}
Let $m_1=2, m_2=1, m_3=0$. An $sl_n-$module via actions could be observed on all possible bases vectors from this partition. The bases are:
\begin{eqnarray*}
\left(\begin{matrix}
1 && 0 \\
 & 0 &
 \end{matrix}\right),
 \left(\begin{matrix}
1 && 0 \\
 & 1 &
 \end{matrix}\right),
 \left(\begin{matrix}
1 && 1 \\
 & 1 &
 \end{matrix}\right) ,
\left(\begin{matrix}
2 && 0 \\
 & 0 &
 \end{matrix}\right),
 \left(\begin{matrix}
2 && 0 \\
 & 1 &
 \end{matrix}\right),
 \left(\begin{matrix}
2 && 1 \\
 & 1 &
 \end{matrix}\right) ,
\left(\begin{matrix}
2 && 0 \\
 & 2 &
 \end{matrix}\right), 
 \left(\begin{matrix}
2 && 1 \\
 & 2 &
 \end{matrix}\right).
 \end{eqnarray*}
 
 Also, actions of $H_{1,1},H_{2,2},H_{3,3},E_{1,2}, E_{2,3}, F_{2,1}, F_{3,2}$ on these bases vectors are as follows:

\begin{eqnarray*}
H_{1,1}\left(\begin{matrix}
1 && 0 \\
 & 0 &
 \end{matrix}\right) &=&  0\left(\begin{matrix}
1 && 0 \\
 & 0 &
 \end{matrix}\right),\hspace{.5cm}
 H_{1,1}\left(\begin{matrix}
1 && 0 \\
 & 1 &
 \end{matrix}\right) =  1\left(\begin{matrix}
1 && 0 \\
 & 1 &
 \end{matrix}\right),   \\\\
 H_{1,1}\left(\begin{matrix}
1 && 1 \\
 & 1 &
 \end{matrix}\right) &=&  1\left(\begin{matrix}
1 && 1 \\
 & 1 &
 \end{matrix}\right), \hspace{.5cm}
H_{1,1}\left(\begin{matrix}
2 && 0 \\
 & 0 &
 \end{matrix}\right) =  0\left(\begin{matrix}
2 && 0\\
 & 0 &
 \end{matrix}\right),   \\\\
 H_{1,1}\left(\begin{matrix}
2 && 0 \\
 & 1 &
 \end{matrix}\right) &=&  1\left(\begin{matrix}
2 && 0 \\
 & 1 &
 \end{matrix}\right), \hspace{.5cm}
 H_{1,1}\left(\begin{matrix}
2 && 1 \\
 & 1 &
 \end{matrix}\right) =  1\left(\begin{matrix}
2 && 1 \\
 & 1 &
 \end{matrix}\right), \\\\
H_{1,1}\left(\begin{matrix}
2 && 0 \\
 & 2 &
 \end{matrix}\right) &=&  2\left(\begin{matrix}
2 && 0 \\
 & 2 &
 \end{matrix}\right), \hspace{.5cm}
 H_{1,1}\left(\begin{matrix}
2 && 1 \\
 & 2 &
 \end{matrix}\right) =  2\left(\begin{matrix}
2 && 1 \\
 & 2 &
 \end{matrix}\right). 
 \end{eqnarray*}
 \begin{eqnarray*}
H_{2,2}\left(\begin{matrix}
1 && 0 \\
 & 0 &
 \end{matrix}\right) &=&  1\left(\begin{matrix}
1 && 0 \\
 & 0 &
 \end{matrix}\right), \hspace{.5cm}
 H_{2,2}\left(\begin{matrix}
1 && 0 \\
 & 1 &
 \end{matrix}\right) =  0\left(\begin{matrix}
1 && 0 \\
 & 1 &
 \end{matrix}\right),   \\\\
 H_{2,2}\left(\begin{matrix}
1 && 1 \\
 & 1 &
 \end{matrix}\right) &=&  1\left(\begin{matrix}
1 && 1 \\
 & 1 &
 \end{matrix}\right), \hspace{.5cm}
H_{2,2}\left(\begin{matrix}
2 && 0 \\
 & 0 &
 \end{matrix}\right)=  2\left(\begin{matrix}
2 && 0\\
 & 0 &
 \end{matrix}\right),   \\\\
 H_{2,2}\left(\begin{matrix}
2 && 0 \\
 & 1 &
 \end{matrix}\right) &=&  1\left(\begin{matrix}
2 && 0 \\
 & 1 &
 \end{matrix}\right), \hspace{.5cm}
 H_{2,2}\left(\begin{matrix}
2 && 1 \\
 & 1 &
 \end{matrix}\right) = 2\left(\begin{matrix}
2 && 1 \\
 & 1 &
 \end{matrix}\right),  \\\\
H_{2,2}\left(\begin{matrix}
2 && 0 \\
 & 2 &
 \end{matrix}\right) &=&  0\left(\begin{matrix}
2 && 0 \\
 & 2 &
 \end{matrix}\right), \hspace{.5cm}
 H_{2,2}\left(\begin{matrix}
2 && 1 \\
 & 2 &
 \end{matrix}\right) = 1\left(\begin{matrix}
2 && 1 \\
 & 2 &
 \end{matrix}\right).
 \end{eqnarray*}
 \begin{eqnarray*}
H_{3,3}\left(\begin{matrix}
1 && 0 \\
 & 0 &
 \end{matrix}\right) &=&  2\left(\begin{matrix}
1 && 0 \\
 & 0 &
 \end{matrix}\right),\hspace{.5cm}
 H_{3,3}\left(\begin{matrix}
1 && 0 \\
 & 1 &
 \end{matrix}\right) = 2\left(\begin{matrix}
1 && 0 \\
 & 1 &
 \end{matrix}\right),  \\\\
 H_{3,3}\left(\begin{matrix}
1 && 1 \\
 & 1 &
 \end{matrix}\right) &=&  1\left(\begin{matrix}
1 && 1 \\
 & 1 &
 \end{matrix}\right), \hspace{.5cm}
H_{3,3}\left(\begin{matrix}
2 && 0 \\
 & 0 &
 \end{matrix}\right) = 1\left(\begin{matrix}
2 && 0\\
 & 0 &
 \end{matrix}\right),  \\\\
 H_{3,3}\left(\begin{matrix}
2 && 0 \\
 & 1 &
 \end{matrix}\right) &=&  1\left(\begin{matrix}
2 && 0 \\
 & 1 &
 \end{matrix}\right),\hspace{.5cm}
H_{3,3}\left(\begin{matrix}
2 && 1 \\
 & 1 &
 \end{matrix}\right) = 0\left(\begin{matrix}
2 && 1 \\
 & 1 &
 \end{matrix}\right),  \\\\
H_{3,3}\left(\begin{matrix}
2 && 0 \\
 & 2 &
 \end{matrix}\right) &=&  1\left(\begin{matrix}
2 && 0 \\
 & 2 &
 \end{matrix}\right), \hspace{.5cm}
 H_{3,3}\left(\begin{matrix}
2 && 1 \\
 & 2 &
 \end{matrix}\right) =  0\left(\begin{matrix}
2 && 1 \\
 & 2 &
 \end{matrix}\right).
 \end{eqnarray*}
 \begin{eqnarray*}
  E_{1,2}\left(\begin{matrix}
1 && 0 \\
 & 0 &
 \end{matrix}\right) &=&  1\left(\begin{matrix}
1 && 0 \\
 & 1 &
 \end{matrix}\right), \hspace{.5cm}
E_{1,2}\left(\begin{matrix}
2 && 0 \\
 & 0 &
 \end{matrix}\right) =\sqrt{2}\left(\begin{matrix}
2 && 0\\
 & 1 &
 \end{matrix}\right),   \\\\
E_{1,2}\left(\begin{matrix}
2 && 0 \\
 & 1 &
 \end{matrix}\right) &=&  \sqrt{2}\left(\begin{matrix}
2 && 0 \\
 & 2 &
 \end{matrix}\right), \hspace{.5cm}
 E_{1,2}\left(\begin{matrix}
2 && 1 \\
 & 1 &
 \end{matrix}\right) =  1\left(\begin{matrix}
2 && 1 \\
 & 2 &
 \end{matrix}\right).
\end{eqnarray*}
 \begin{eqnarray*}
E_{2,3}\left(\begin{matrix}
1 && 0 \\
 & 0 &
 \end{matrix}\right) &=&  1\left(\begin{matrix}
2 && 0 \\
 & 0 &
 \end{matrix}\right),\hspace{.5cm}
 E_{2,3}\left(\begin{matrix}
1 && 0 \\
 & 1 &
 \end{matrix}\right) = \frac{\sqrt{2}}{2}\left(\begin{matrix}
2 && 0 \\
 & 1 &
 \end{matrix}\right),  \\\\
 E_{2,3}\left(\begin{matrix}
1 && 1 \\
 & 1 &
 \end{matrix}\right) &=&  \frac{\sqrt{3}}{\sqrt{2}}\left(\begin{matrix}
2 && 1 \\
 & 1 &
 \end{matrix}\right),\hspace{.5cm}
 E_{2,3}\left(\begin{matrix}
2 && 0 \\
 & 1 &
 \end{matrix}\right) =  \frac{\sqrt{2}}{2}\left(\begin{matrix}
2 && 1 \\
 & 1 &
 \end{matrix}\right), \\\\
 E_{2,3}\left(\begin{matrix}
2 && 0 \\
 & 2 &
 \end{matrix}\right) &=&  1\left(\begin{matrix}
2 && 1 \\
 & 2 &
 \end{matrix}\right). 
  \end{eqnarray*}
\begin{eqnarray*}
F_{2,1}\left(\begin{matrix}
1 && 0 \\
 & 1 &
 \end{matrix}\right) &=&  1\left(\begin{matrix}
1 && 0 \\
 & 0 &
 \end{matrix}\right), \hspace{.5cm}
 F_{2,1}\left(\begin{matrix}
2 && 0 \\
 & 1 &
 \end{matrix}\right) = \sqrt{2}
  \left(\begin{matrix}
2 && 0 \\
 & 0 &
 \end{matrix}\right),\\\\
F_{2,1}\left(\begin{matrix}
2 && 0 \\
 & 2 &
 \end{matrix}\right) &=&  \sqrt{2}\left(\begin{matrix}
2 && 0 \\
 & 1 &
 \end{matrix}\right), \hspace{.5cm}
 F_{2,1}\left(\begin{matrix}
2 && 1 \\
 & 2 &
 \end{matrix}\right) =  1\left(\begin{matrix}
2 && 1 \\
 & 1 &
 \end{matrix}\right).
 \end{eqnarray*}
 \begin{eqnarray*}
  F_{3,2}\left(\begin{matrix}
1 && 1 \\
 & 1 &
 \end{matrix}\right) &=&  \frac{\sqrt{3}}{\sqrt{2}}\left(\begin{matrix}
1 && 0 \\
 & 1 &
 \end{matrix}\right), \hspace{.5cm} 
F_{3,2}\left(\begin{matrix}
2 && 0 \\
 & 0 &
 \end{matrix}\right) = 1\left(\begin{matrix}
1 && 0\\
 & 0 &
 \end{matrix}\right),   \\\\
 F_{3,2}\left(\begin{matrix}
2 && 0 \\
 & 1 &
 \end{matrix}\right) &=&  \frac{\sqrt{2}}{2}\left(\begin{matrix}
1 && 0 \\
 & 1 &
 \end{matrix}\right),\hspace{.5cm}
 F_{3,2}\left(\begin{matrix}
2 && 1 \\
 & 1 &
 \end{matrix}\right) =  \frac{\sqrt{3}}{\sqrt{2}}\left(\begin{matrix}
1 && 1 \\
 & 1 &
 \end{matrix}\right),   \\\\
 F_{3,2}\left(\begin{matrix}
2 && 1 \\
 & 2 &
 \end{matrix}\right) &=&  1\left(\begin{matrix}
2 && 0 \\
 & 2 &
 \end{matrix}\right).
 \end{eqnarray*}
\end{exa}

The theorem below can now follow suit.
\begin{thm}[$sl_n-$module]\label{3.3.2}
The representation space $R$ is a $sl_n-$module.
\end{thm}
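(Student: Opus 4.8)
The statement asserts that the three families of operators on $R$ defined by the Gelfand--Tsetlin formulas \eqref{3.0.3}, \eqref{3.0.5} and \eqref{3.0.6} assemble into a Lie algebra homomorphism $\rho\colon sl_n\to\mathrm{End}(R)$. The plan is to exploit the generation statements already recorded above: $sl_n$ is generated by the simple root vectors $E_{i,i+1}$, the opposite vectors $F_{i+1,i}$ and the diagonal elements $H_{i,i}$, with every other $E_{i,j}$ and $F_{j,i}$ obtained from iterated brackets such as $E_{1,3}=[E_{1,2},E_{2,3}]$ and $F_{3,1}=[F_{3,2},F_{2,1}]$. Hence I first set $\rho$ on these generators by the explicit formulas and then, invoking the standard (Chevalley--Serre) presentation of $sl_n$, reduce the whole claim to checking that the $\rho$-images satisfy the defining bracket relations listed in the Introduction.

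First I would dispatch the formal relations. Because each $\rho(H_{i,i})$ acts diagonally on the basis $\{\xi\}$ by \eqref{3.0.6}, the diagonal operators commute, giving $[\rho(H_{i,i}),\rho(H_{j,j})]=0$. For the weight relations, note that $\rho(E_{j,j+1})$ raises every entry of a single intermediate row of $\xi$ by one; comparing the scalar $\sum_i m_{i,k}-\sum_i m_{i,k-1}$ of \eqref{3.0.6} before and after this shift yields $[\rho(H_{i,i}),\rho(E_{j,j+1})]$ and $[\rho(H_{i,i}),\rho(F_{j+1,j})]$ with exactly the coefficients prescribed by the bracket relations. Finally, when $|i-j|>1$ the operators $\rho(E_{i,i+1})$ and $\rho(E_{j,j+1})$ move disjoint rows of $\xi$, so their shifts commute termwise and $[\rho(E_{i,i+1}),\rho(E_{j,j+1})]=0$; the same disjointness argument handles the mixed distant commutators.

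The heart of the proof is the $sl_2$-relation $[\rho(E_{i,i+1}),\rho(F_{i+1,i})]=\rho(H_{i,i}-H_{i+1,i+1})$, together with the vanishing $[\rho(E_{i,i+1}),\rho(F_{j+1,j})]=0$ for $i\ne j$ and the two genuine Serre relations. Applying $\rho(F)$ and then $\rho(E)$ (and conversely) to a fixed pattern $\xi$ produces a double sum over patterns reached by lowering one entry of a row and then raising one entry; the terms that change two different entries must cancel between the two orderings, while the terms that raise and lower the same entry must recombine into the single diagonal scalar of \eqref{3.0.6}. This forces a precise cancellation among the square-root coefficients $a^j_{k-1,k}$ and $b^j_{k,k-1}$ of \eqref{3.0.3}--\eqref{3.0.5}, and verifying it is the one genuinely laborious step.

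To control that step I would localise the computation. Each generator $E_{k-1,k}$, $F_{k,k-1}$ only touches the three consecutive rows $k-2,k-1,k$, so every relation is in fact tested inside a configuration identical to the $n=3$ case, for which the operators $E_{1,2},E_{2,3},F_{2,1},F_{3,2}$ are written out explicitly above. I would therefore first confirm all the relations directly on the eight-vector basis of the worked example (a finite check), and then argue that the square-root identities needed in general are literally the same ones, since only three rows ever participate. The main obstacle is thus purely the bookkeeping of these radical coefficients; once the cross terms are seen to cancel and the surviving diagonal terms to telescope to the correct weight, the homomorphism property follows and $R$ is an $sl_n$-module.
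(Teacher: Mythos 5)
You should first note that the paper itself offers no proof of Theorem \ref{3.3.2}: Section \ref{3.3} states explicitly, both before and after the theorem, that it will not be proved. So there is no argument in the text to compare yours against, and your overall strategy --- define $\rho$ on the Chevalley generators $E_{i,i+1}$, $F_{i+1,i}$, $H_{i,i}$ by the Gelfand--Tsetlin formulas \eqref{3.0.3}, \eqref{3.0.5}, \eqref{3.0.6}, and verify the defining relations, the easy ones by weight/disjoint-row considerations and the hard ones ($[E_{i,i+1},F_{i+1,i}]=H_{i,i}-H_{i+1,i+1}$, $[E_{i,i+1},F_{j+1,j}]=0$ for $i\ne j$, and the Serre relations) by cancellation among the coefficients $a^j_{k-1,k}$ and $b^j_{k,k-1}$ --- is the standard and correct route. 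That said, as written your argument has a genuine gap precisely at the step you yourself flag as the heart of the matter: you never carry out the coefficient verification, and the shortcut you propose does not close it. Checking the relations on the eight basis vectors of the worked example $m_1=2,m_2=1,m_3=0$ is a numerical check at one point; the identities you actually need are rational-function identities in the free parameters $l_{p,q}=m_{p,q}-p$ ranging over all admissible patterns and all partitions, and a finite spot-check cannot establish them.

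Your localisation claim also needs repair. It is true that $E_{k-1,k}$ and $F_{k,k-1}$ only alter row $k-1$ and that their coefficients only involve rows $k-2$, $k-1$, $k$, so the relation $[E_{k-1,k},F_{k,k-1}]=H_{k-1,k-1}-H_{k,k}$ does reduce to a three-row computation formally identical to the $n=3$ case (with general entries, not the specific example). But for the adjacent-index relations --- $[E_{k-1,k},F_{k+1,k}]=0$ and the Serre relations between $E_{k-1,k}$ and $E_{k,k+1}$ --- the two operators between them touch rows $k-2$ through $k+1$, a window of four consecutive rows, so these are not ``literally the same'' identities as in the $n=3$ case; they require the $n=4$-sized local computation. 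None of this makes the approach wrong, but until the symbolic cancellations among the square-root coefficients are actually exhibited (for general pattern entries, in the three-row and four-row windows respectively), what you have is a correct plan rather than a proof.
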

Like we stated from the beginning of this section, we will not prove this theorem. 
\section{Weights and weight vectors in R}\label{3.4}
A highest weight vector is the weight vector that is annihilated by every $(n \times n)$ upper triangular matrix (that is $E_{i,j}$ with $i<j$). We fixed $\xi$ as our basis vector in $R$, the representation space where $q$ is any integer depending on some conditions \citep{gelfand1950finite}. The nature of each basis vector depends on the dimension $n$ of operator $E_{i,j}$ acting on it and the partition. For $n=2$, we choose some integers $m_1$, $m_2$ ($m_1\geq m_2$) such that the condition $m_1 \geq q \geq m_2$ is satisfied. When $n=3$, we choose three integers $m_1$, $m_2$, $m_3$ ($m_1\geq m_2\geq m_3$). The bases vectors in the representation space are now numbered by triples, $p_1$, $p_2$, $q$. The representation is given by 
$$m_1\geq p_1 \geq m_2\geq p_2 \geq m_3 $$
\text{ and }$$ p_1\geq q \geq p_2.$$ 
We have our bases vectors of the form
 $$\xi = \left(\begin{matrix}
p_1 && p_2 \\
&q&
\end{matrix} \right).$$ 
Every weight vector has a corresponding weight. The bases vectors are the weight vectors. Constructing these bases depends on the choices of 
$$m_1\geq m_2 \geq \cdots \geq m_n$$
 as defined above.

Suppose $H$ is a square diagonal matrix. The action
\begin{align}\label{3.4.1}
H_{i,i}(\xi)=\kappa_i (\xi),
\end{align}
 where $\kappa_i$ is the eigenvalue of corresponding weight vector $\xi$. In the relation given by Equation \eqref{3.4.1}, there is a map $\omega$ such that for $h\in H$,
 \begin{eqnarray*}
 \omega :h &\longrightarrow & \mathbb{C} \\ 
 H_{i,i} &\longmapsto & \kappa. 
 \end{eqnarray*}
The map $\omega$ is the weight.
\begin{exa} 
Let $m_1=2$, $m_2=1$ and $m_3=0$. We will construct all weight vectors and then find their eigenvalues ($\kappa_i$) and their corresponding weights. Let $H_{i,i}$, $i=1,2,3$ be the operator. The action of the operator on $\xi$ is as follows:

\begin{eqnarray*}
H_{1,1}\left(\begin{matrix}
1 && 0 \\
 & 0 &
 \end{matrix}\right) &=&  0\left(\begin{matrix}
1 && 0 \\
 & 0 &
 \end{matrix}\right),\hspace{.5cm} \Rightarrow \kappa = 0, \\\\
 H_{1,1}\left(\begin{matrix}
1 && 0 \\
 & 1 &
 \end{matrix}\right) &=&  1\left(\begin{matrix}
1 && 0 \\
 & 1 &
 \end{matrix}\right),\hspace{.5cm} \Rightarrow \kappa =1,  \\\\
 H_{1,1}\left(\begin{matrix}
1 && 1 \\
 & 1 &
 \end{matrix}\right) &=&  1\left(\begin{matrix}
1 && 1 \\
 & 1 &
 \end{matrix}\right),\hspace{.5cm} \Rightarrow \kappa =1,  \\\\
 \end{eqnarray*}
 \begin{eqnarray*}
H_{1,1}\left(\begin{matrix}
2 && 0 \\
 & 0 &
 \end{matrix}\right) &=&  0\left(\begin{matrix}
2 && 0\\
 & 0 &
 \end{matrix}\right),\hspace{.5cm} \Rightarrow \kappa =0,  \\\\
 H_{1,1}\left(\begin{matrix}
2 && 0 \\
 & 1 &
 \end{matrix}\right) &=&  1\left(\begin{matrix}
2 && 0 \\
 & 1 &
 \end{matrix}\right),\hspace{.5cm} \Rightarrow \kappa = 1, \\\\
 H_{1,1}\left(\begin{matrix}
2 && 1 \\
 & 1 &
 \end{matrix}\right) &=&  1\left(\begin{matrix}
2 && 1 \\
 & 1 &
 \end{matrix}\right),\hspace{.5cm} \Rightarrow \kappa = 1, \\\\
H_{1,1}\left(\begin{matrix}
2 && 0 \\
 & 2 &
 \end{matrix}\right) &=&  2\left(\begin{matrix}
2 && 0 \\
 & 2 &
 \end{matrix}\right),\hspace{.5cm} \Rightarrow \kappa = 2,\\\\
 H_{1,1}\left(\begin{matrix}
2 && 1 \\
 & 2 &
 \end{matrix}\right) &=&  2\left(\begin{matrix}
2 && 1 \\
 & 2 &
 \end{matrix}\right),\hspace{.5cm} \Rightarrow \kappa = 2, \\\\
 \end{eqnarray*}
 \begin{eqnarray*}
  H_{2,2}\left(\begin{matrix}
1 && 0 \\
 & 0 &
 \end{matrix}\right) &=&  1\left(\begin{matrix}
1 && 0 \\
 & 0 &
 \end{matrix}\right),\hspace{.5cm} \Rightarrow \kappa = 1, \\\\
 H_{2,2}\left(\begin{matrix}
1 && 0 \\
 & 1 &
 \end{matrix}\right) &=&  0\left(\begin{matrix}
1 && 0 \\
 & 1 &
 \end{matrix}\right),\hspace{.5cm} \Rightarrow \kappa =0,  \\\\
  H_{2,2}\left(\begin{matrix}
1 && 1 \\
 & 1 &
 \end{matrix}\right) &=&  1\left(\begin{matrix}
1 && 1 \\
 & 1 &
 \end{matrix}\right),\hspace{.5cm} \Rightarrow \kappa =1,  \\\\
 H_{2,2}\left(\begin{matrix}
2 && 0 \\
 & 0 &
 \end{matrix}\right) &=&  2\left(\begin{matrix}
2 && 0\\
 & 0 &
 \end{matrix}\right),\hspace{.5cm} \Rightarrow \kappa =2,  \\\\
 H_{2,2}\left(\begin{matrix}
2 && 0 \\
 & 1 &
 \end{matrix}\right) &=&  1\left(\begin{matrix}
2 && 0 \\
 & 1 &
 \end{matrix}\right),\hspace{.5cm} \Rightarrow \kappa = 1, \\\\
 H_{2,2}\left(\begin{matrix}
2 && 1 \\
 & 1 &
 \end{matrix}\right) &=&  2\left(\begin{matrix}
2 && 1 \\
 & 1 &
 \end{matrix}\right),\hspace{.5cm} \Rightarrow \kappa =2  \\\\
H_{2,2}\left(\begin{matrix}
2 && 0 \\
 & 2 &
 \end{matrix}\right) &=&  0\left(\begin{matrix}
2 && 0 \\
 & 2 &
 \end{matrix}\right),\hspace{.5cm} \Rightarrow \kappa =0, \\\\
  H_{2,2}\left(\begin{matrix}
2 && 1 \\
 & 2 &
 \end{matrix}\right) &=&  1\left(\begin{matrix}
2 && 1 \\
 & 2 &
 \end{matrix}\right),\hspace{.5cm} \Rightarrow \kappa = 1,
 \end{eqnarray*}
 \begin{eqnarray*}
H_{3,3}\left(\begin{matrix}
1 && 0 \\
 & 0 &
 \end{matrix}\right) &=&  2\left(\begin{matrix}
1 && 0 \\
 & 0 &
 \end{matrix}\right),\hspace{.5cm} \Rightarrow \kappa =2,  \\\\
 H_{3,3}\left(\begin{matrix}
1 && 0 \\
 & 1 &
 \end{matrix}\right) &=&  2\left(\begin{matrix}
1 && 0 \\
 & 1 &
 \end{matrix}\right), \hspace{.5cm} \Rightarrow \kappa = 2, \\\\
 \end{eqnarray*}
 \begin{eqnarray*}
 H_{3,3}\left(\begin{matrix}
1 && 1 \\
 & 1 &
 \end{matrix}\right) &=&  1\left(\begin{matrix}
1 && 1 \\
 & 1 &
 \end{matrix}\right),\hspace{.5cm} \Rightarrow \kappa =1,  \\\\
H_{3,3}\left(\begin{matrix}
2 && 0 \\
 & 0 &
 \end{matrix}\right) &=&  1\left(\begin{matrix}
2 && 0\\
 & 0 &
 \end{matrix}\right),\hspace{.5cm} \Rightarrow \kappa =1,  \\\\
 H_{3,3}\left(\begin{matrix}
2 && 0 \\
 & 1 &
 \end{matrix}\right) &=&  1\left(\begin{matrix}
2 && 0 \\
 & 1 &
 \end{matrix}\right),\hspace{.5cm} \Rightarrow \kappa = 1, \\\\
 H_{3,3}\left(\begin{matrix}
2 && 1 \\
 & 1 &
 \end{matrix}\right) &=&  0\left(\begin{matrix}
2 && 1 \\
 & 1 &
 \end{matrix}\right),\hspace{.5cm} \Rightarrow \kappa = 0, \\\\
H_{3,3}\left(\begin{matrix}
2 && 0 \\
 & 2 &
 \end{matrix}\right) &=&  1\left(\begin{matrix}
2 && 0 \\
 & 2 &
 \end{matrix}\right),\hspace{.5cm} \Rightarrow \kappa = 1,\\\\
 H_{3,3}\left(\begin{matrix}
2 && 1 \\
 & 2 &
 \end{matrix}\right) &=&  0\left(\begin{matrix}
2 && 1 \\
 & 2 &
 \end{matrix}\right),\hspace{.5cm} \Rightarrow \kappa = 0.
 \end{eqnarray*}
The coefficients of basis vectors above are the eigenvalues and they help us calculate the weights for corresponding weight vector $\xi$. For any weight vector $\xi$, the corresponding weight in the case $n=3$ as defined by the map $\omega$ is as follows:
 \begin{eqnarray*}
\text{wight} \left(\begin{matrix}
1 && 0 \\
 & 0 &
 \end{matrix}\right) & =  \varepsilon_2 + 2\varepsilon_3,  
 \hspace{1cm}\text{wight} \left(\begin{matrix}
1 && 0 \\
 & 1 &
 \end{matrix}\right) &  = \varepsilon_1 +  2\varepsilon_3, \\\\
\text{wight} \left(\begin{matrix}
1 && 1 \\
 & 1 &
 \end{matrix}\right) &= \varepsilon_1 + \varepsilon_2 + \varepsilon_3,  
\hspace{1cm}\text{wight} \left(\begin{matrix}
2 && 0 \\
 & 0 &
 \end{matrix}\right)  & = 2\varepsilon_2 + \varepsilon_3,  \\\\
\text{wight} \left(\begin{matrix}
2 && 0 \\
 & 1 &
 \end{matrix}\right)  &= \varepsilon_1 + \varepsilon_2 + \varepsilon_3, 
\hspace{.3cm}\text{wight} \left(\begin{matrix}
2 && 1 \\
 & 1 &
 \end{matrix}\right) &  = \varepsilon_1 + 2\varepsilon_2 , \\\\
\hspace{1.5cm}\text{wight} \left(\begin{matrix}
2 && 0 \\
 & 2 &
 \end{matrix}\right)  & = 2\varepsilon_1 + \varepsilon_3,
\hspace{1cm}\text{wight} \left(\begin{matrix}
2 && 1 \\
 & 2 &
 \end{matrix}\right)& = 2\varepsilon_1 + \varepsilon_2.
 \end{eqnarray*}
\end{exa}
Now, for arbitrary partition
\begin{align*}
m_1&\geq m_2\geq \cdots \geq m_n,\\
&H_{i,i}(\xi)=\kappa_i (\xi)
\end{align*}
has weight 
\begin{align*}
\omega = \kappa_1\varepsilon_1+\kappa_2\varepsilon_2+\cdots + \kappa_n\varepsilon_n
\end{align*}
where $\varepsilon_1$ is the weight for $\xi_1$, $\varepsilon_1 + \varepsilon_2$ the weight of $\xi_2$ and so on. Since $sl_n$ is trace free, 
\begin{align}
\varepsilon_1+\cdots +\varepsilon_n=0.
\end{align}
By definition, 
\begin{align*}
\omega : h&\longrightarrow  \mathbb{C},\\
H_{i,i}-H_{i+1,i+1}&\longmapsto 1, \\
H_{j,j}-H_{j+1,j+1}&\longmapsto  0, \hspace{1cm} \forall j\neq i. \\
\end{align*}
Then for $H\in h$, 
$$H\left(\begin{matrix}
1 && 0 \\
 & 0 &
 \end{matrix}\right)  = a\left(\begin{matrix}
1 && 0 \\
 & 0 &
 \end{matrix}\right), $$
 where $a$ is an eigenvalue.
So 

\begin{eqnarray*}
H &=& \omega_1(H_{1,1}-H_{2,2})+\omega_2(H_{2,2}-H_{3,3})\\
H \left(\begin{matrix}
1 && 0 \\
 & 0 &
 \end{matrix}\right)&=& \left(\omega_1(H_{1,1}-H_{2,2})+\omega_2(H_{2,2}-H_{3,3})\right)\left(\begin{matrix}
1 && 0 \\
 & 0 &
 \end{matrix}\right)\\\\
&=& \left[\omega_1(\kappa_1-\kappa_2)+\omega_2(\kappa_2-\kappa_3)\right]\left(\begin{matrix}
1 && 0 \\
 & 0 &
 \end{matrix}\right)\\\\
 &=& \left[\omega_1(-1)+\omega_2(-1)\right]\left(\begin{matrix}
1 && 0 \\
 & 0 &
 \end{matrix}\right)\\\\
 &=& \left(-\omega_1 -\omega_2 \right)\left(\begin{matrix}
1 && 0 \\
 & 0 &
 \end{matrix}\right).
\end{eqnarray*}
The weight for $\left(\begin{matrix}
1 && 0 \\
 & 0 &
 \end{matrix}\right)$ 
 is $-\omega_1 - \omega_2$ which is the same as $\varepsilon_1+2\varepsilon_3$ above.

In general, 
$$H_{i,i}(\xi)=\left(\sum_{i=1}^k m_{i,k}-\sum_{i=1}^{k-1} m_{i,k-1} \right)\left(\xi\right)$$
where 
$$\left(\sum_{i=1}^k m_{i,k}-\sum_{i=1}^{k-1} m_{i,k-1} \right)$$
is the coefficient of $\xi$.

We can compute any weight from the above relation. The relation works by summing the corresponding row entries of $k^{\text{th}}$ row and then taking away the sum of all entries of the $(k-1)^{\text{th}}$ row. For any $n$, the highest ($n^{\text{th}}$) row for each basis vector corresponds on the the fixed $m_1$ $m_2$ $\cdots$ $m_n$. 

Suppose we let $m_1=3$, $m_2=2$ and $m_3=0$. Then $$E_{1,2}^{l_3}E_{2,3}^{l_2}E_{1,2}^{l_1}\xi_q=\beta$$
where $\beta$ is a highest weight vector and $l_i$ is the maximum times each operator can act on $\xi$ while all conditions are observed to either raise the first row or the second row of $\xi$. Due to the nature of transitions as a consequence of the the action of the sequence, the result is unique.

The representation space $R$ is simple if for all $v\in R$, $\exists$ upper triangular square matrices such that 
\begin{align*}
E^{\underbar{a}(\alpha)}\cdot v=\beta,
\end{align*}
a highest weight vector. The weight vectors could be of the form 
\begin{align}
v=\sum_{\alpha (\text{patterns})}a_{\alpha} \alpha
\end{align}
 where $\alpha$ is a weight vector. We will show that there exists a sequence $E^{\underbar{ a}(\alpha)}$ such that its action on any sum of weight vectors annihilates all but one. That remaining weight vector is a highest weight vector. For example, let $m_1=4$, $m_2=2$ and $m_3=0$. We need $E_{2,3}^2E_{1,2}$ to act on the sum $(\alpha_1 + \alpha_2)$ to yield exactly the highest weight vector when $$\alpha_1=\left(\begin{matrix}
4 & & 0 \\
 & 3 & 
\end{matrix}\right) \text{ and } \alpha_2=\left(\begin{matrix}
4 & & 2 \\
 & 2 & 
\end{matrix}\right).$$ 
The process becomes most critical when there are many basis vectors to act on. The action yields new basis vectors and annihilates some others in the process. With a sequence that acts maximally on these basis vectors, they will all be annihilated except for one. Any more post actions will annihilate the resulting vector. For $m_1=3$, $m_2=2$, and $m_3=0$, we want to find a sequence $\underbar{E}$ such that  
$$\underbar{E}\left( \left(\begin{matrix}
3 && 1 \\
& 1 & \\
\end{matrix}\right)+\left(\begin{matrix}
3 && 1 \\
& 2 & \\
\end{matrix}\right)+\left(\begin{matrix}
2 && 2 \\
& 2 & \\
\end{matrix}\right)+\left(\begin{matrix}
3 && 0 \\
& 1 & \\
\end{matrix}\right)+\left(\begin{matrix}
2 && 1 \\
& 1 & \\
\end{matrix}\right)\right)$$
gives a highest weight vector. We see that $E_{1,2}^2$ annihilates three basis vectors already and raises two others to 
$$\left( \left(\begin{matrix}
3 && 1 \\
& 3 & \\
\end{matrix}\right)+\left(\begin{matrix}
3 && 0 \\
& 3 & \\
\end{matrix}\right)\right).$$ The action of a $E_{2,3}$ on the resulting basis vectors annihilates one and raises the other to yield a highest weight vector. That is 
$$\left( \begin{matrix}
3 && 2 \\
& 3 & \\
\end{matrix}\right).$$
 The sequence 
 \begin{align*}
 \underbar{E}=E_{2,3}E_{1,2}^2.
 \end{align*}
 If this process is always true, then $R$ is a simple $sl_n-$module. We proved this in Theorem \ref{5.1.1}.

\chapter{Results and discussion}\label{5}
\section{Simple $sl_n-$module}\label{5.1}
\begin{thm}[Simple $sl_n-$module]\label{5.1.1}
The representation space $R$ is a simple $sl_n-$module.
\end{thm}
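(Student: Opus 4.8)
The plan is to prove simplicity by showing that \emph{every} nonzero vector of $R$ generates the whole module. Concretely, I would establish two facts: (A) the maximal pattern $\beta$ --- the pattern whose $(k-1)$-th row entries all sit at their largest admissible values --- is a highest weight vector and generates $R$ under repeated action of the lowering operators $F_{k,k-1}$; and (B) every nonzero $v\in R$ can be carried to a nonzero multiple of $\beta$ by a suitable product of raising operators $E_{k-1,k}$. Granting (A) and (B), if $S\neq\{0\}$ is any submodule and $0\neq v\in S$, then (B) gives $\beta\in S$ and (A) gives $R=U(sl_n)\beta\subseteq S$, so $S=R$ and $R$ is simple. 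Note that $R$ being an $sl_n$-module at all is supplied by Theorem \ref{3.3.2}.

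First I would pin down the top weight space and the uniqueness of $\beta$ as a highest weight vector. Since $H_{i,i}$ acts diagonally with the eigenvalues recorded in Equation \eqref{3.0.6}, patterns of distinct weight lie in distinct weight spaces, and the weight of $\beta$ (all rows maximal) is strictly largest; in particular the top weight space $R_\lambda$ is one-dimensional and equals $\mathbb{C}\beta$. That $E_{k-1,k}\beta=0$ for all $k$ follows because no entry of $\beta$ can be raised, so every structure constant $a^j_{k-1,k}$ that would move $\beta$ upward vanishes, and $\beta$ is therefore a highest weight vector.

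For (A), I would argue by descending induction on a total order on the basis patterns --- say a lexicographic order on their entries --- chosen so that $\beta$ is maximal and each $F_{k,k-1}$ strictly lowers this order. Given a pattern $\alpha\neq\beta$ whose strict predecessors in the order are already known to lie in $U(u^-)\beta$, I would pick a row-entry of the pattern just above $\alpha$ that is not yet at its minimal admissible value and apply the corresponding $F_{k,k-1}$; because the coefficients in Equation \eqref{3.0.5} are nonzero exactly when the lowering is admissible, a leading-term argument isolates $\alpha$ (up to already-constructed lower terms) inside $U(u^-)\beta$. Iterating down the order reaches every basis vector, so $\beta$ generates $R$.

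The crux is (B), and the hard part is ruling out cancellation among several patterns of the same weight, a phenomenon the worked examples of Section \ref{3.4} only illustrate in small cases. I would first use the diagonal action to reduce to the case where $v$ is a single weight vector: since the $H$-eigenvalues separate distinct weights, each weight component of $v$ already lies in $U(sl_n)v$, isolated by a polynomial-in-$H$ (Vandermonde) argument. Then I would pass to the finite-dimensional subspace $W=U(u^+)v$, which is nonzero and stable under the raising operators, and select a weight vector $w\in W$ of maximal weight; by maximality $E_{k-1,k}w=0$ for every $k$, so $w$ is a highest weight vector, whence $w=c\beta$ with $c\neq0$ by the uniqueness established above. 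This places $c\beta\in U(u^+)v\subseteq U(sl_n)v$ and finishes (B). The genuinely delicate point is that uniqueness claim: one must show that any $w=\sum_\alpha a_\alpha\alpha$ of fixed weight annihilated by every $E_{k-1,k}$ is supported on $\beta$ alone. I would prove this by the same leading-term argument, verifying that the contribution of the highest pattern in the support of $w$ to some $E_{k-1,k}w$ cannot be cancelled by the other terms --- the nonvanishing of the raising coefficients whenever a raise is possible being essential here --- unless that highest pattern is already $\beta$.
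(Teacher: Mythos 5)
Your strategy is sound and reaches the same conclusion as the paper, but the key step is organised quite differently. The paper proves simplicity by fixing a total order on the Gelfand--Tsetlin patterns, taking the \emph{minimal} pattern $\xi_1$ in the support of a nonzero $v$, and applying an explicitly chosen monomial $E^{a(\xi_1)}$ in the raising operators that (it asserts) annihilates every larger pattern while carrying $\xi_1$ to a nonzero multiple of $\beta$ (Propositions \ref{5.1.2}--\ref{5.1.3} and Lemma \ref{5.1.4}); generation of $R$ by $\beta$ is then Lemma \ref{5.1.6}, which matches your part (A) almost verbatim. Your part (B) instead goes the structural route: separate weight components by a Vandermonde argument in the $H_{i,i}$, pass to $W=U(u^+)v$, extract a maximal-weight hence singular vector $w$, and classify singular vectors to force $w\in\mathbb{C}\beta$. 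What your version buys is a clean separation between soft Lie-theoretic facts and the single combinatorial input that actually needs checking --- that the only pattern-linear-combination of a fixed weight killed by all $E_{k-1,k}$ is supported on $\beta$; what the paper's version buys is explicitness, since the monomial $E^{a(\xi)}$ it produces is reused directly to index the monomial basis of Section \ref{5.2}. Be aware, though, that the ``genuinely delicate point'' you flag is not an artifact of your route: it is exactly the same no-cancellation statement about the coefficients in Equation \eqref{3.0.5} and its raising analogue that the paper's Lemma \ref{5.1.4} asserts (``$\sum_{\Psi_{i,j}>\xi_1}\Psi_{i,j}$ will be annihilated by the action since its elements are bigger'') without carrying out the leading-term verification. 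So neither argument is complete until one fixes an order in which each $E_{k-1,k}$ is strictly increasing and checks that the topmost image of the extremal pattern in the support cannot be produced, let alone cancelled, by the images of the other patterns of the same weight; your proposal at least names this obligation explicitly, which the paper does not.
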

This theorem requires a proof for many parts so we break it down into two propositions and two lemmas.
\begin{pro}\label{5.1.2}
For every given partition $\exists$ a highest weight vector, $\beta$.
\end{pro}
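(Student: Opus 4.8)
The plan is to produce an explicit $\beta$, namely the \emph{maximal} Gelfand--Tsetlin pattern attached to the partition, and to check directly that every raising operator annihilates it. First I would reduce the problem to the simple raising operators. A highest weight vector must be killed by every $E_{i,j}$ with $i<j$, but the paper already records that these are generated by the adjacent ones, e.g. $E_{1,3}=[E_{1,2},E_{2,3}]$, and in general all upper triangular $E_{i,j}$ are obtained from the $E_{k-1,k}$ by iterated brackets. Hence it suffices to exhibit a nonzero weight vector $\beta$ with $E_{k-1,k}(\beta)=0$ for every $k$, since then $E_{i,j}(\beta)=0$ follows automatically from bilinearity of the bracket.

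Next I would fix the candidate. Let $\beta$ be the pattern in which every row is the left-justified copy of the top row, i.e. $m_{j,k}=m_j$ for all valid $j\le k$; for $n=3$ this is $\beta=\left(\begin{smallmatrix} m_1 && m_2 \\ & m_1 & \end{smallmatrix}\right)$, which is exactly the vector produced in the $m_1=3,\,m_2=2,\,m_3=0$ computation at the end of Section~\ref{3.4}. This is a legitimate basis vector: all interlacing inequalities of Section~\ref{3.2} hold, with equality forced on every upper bound, since $m_{j,k}=m_j=m_{j,k-1}$. Being a basis vector, $\beta$ is nonzero, and by Theorem~\ref{3.3.2} together with the diagonal action \eqref{3.0.6} it is a weight vector, so it only remains to verify that it is annihilated by the $E_{k-1,k}$.

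The heart of the argument is this verification. By \eqref{3.0.3} the operator $E_{k-1,k}$ sends $\beta$ to a sum of patterns, each obtained by raising a single entry $m_{j,k-1}$ of row $k-1$ by one, weighted by $a_{k-1,k}^{j}$. In $\beta$ every such entry already sits at its maximal admissible value $m_j$, which equals the entry $m_{j,k}=m_j$ lying above it; raising it to $m_j+1$ breaks the interlacing bound $m_{j,k}\ge m_{j,k-1}$, so no summand corresponds to a valid pattern. Equivalently, in the coefficient formula the numerator carries the factor $l_{j,k}-l_{j,k-1}=(m_j-j)-(m_j-j)=0$, which genuinely occurs in the product $\prod_{i=1}^{k}(l_{i,k}-l_{j,k-1})$ because $j\le k-1<k$, so the index $i=j$ is included. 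Thus every $a_{k-1,k}^{j}$ vanishes and $E_{k-1,k}(\beta)=0$ for each $k$. Consequently $\beta$ is a nonzero weight vector annihilated by all simple raising operators, hence by all $E_{i,j}$ with $i<j$, so $\beta$ is a highest weight vector; since the construction depends only on the chosen partition, such a $\beta$ exists for every partition, which is the claim.

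I expect the only delicate point to be the coefficient bookkeeping. One must confirm that the vanishing numerator factor is present for every choice of $j$ (handled above by the inequality $j<k$) and that the accompanying denominator $\prod_{i\ne j}(l_{i,k-1}-l_{j,k-1})(l_{i,k-1}-l_{j,k-1}-1)$ does not create an indeterminate $0/0$. This is transparent when the $m_i$ are strictly decreasing, but when some $m_i$ coincide the maximal pattern has repeated entries and a factor of the denominator can also vanish; there the honest justification is the combinatorial one, that the raised pattern simply violates the betweenness conditions of Section~\ref{3.2} and is therefore absent from the sum, in accordance with the principle stressed throughout the paper that a coefficient is nonzero precisely when the corresponding action is admissible.
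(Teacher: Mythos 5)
Your proof is correct, but it takes a genuinely different route from the paper's. You verify the defining property of a highest weight vector head-on: you reduce to the simple raising operators $E_{k-1,k}$ via the bracket relations, and then check from the explicit action \eqref{3.0.3} that every summand $\xi_{k-1,k}^{j}$ obtained from the maximal pattern $\beta$ violates the interlacing condition $m_{j,k}\ge m_{j,k-1}$ (equivalently, the numerator factor $l_{j,k}-l_{j,k-1}$ vanishes because $m_{j,k}=m_j=m_{j,k-1}$), so $E_{k-1,k}(\beta)=0$ and hence $E_{i,j}(\beta)=0$ for all $i<j$. The paper instead introduces a total order on basis vectors and argues that for any $v=\sum_i c_i\xi_i$ a suitable sequence of raising operators sends $v$ to a nonzero multiple of $\beta$, concluding that $\beta$ is ``highest''; it then computes the weight of $\beta$. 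That argument is really the germ of Proposition \ref{5.1.3} and Lemma \ref{5.1.4} (reachability of $\beta$ from every nonzero vector), and it tacitly assumes the very fact you prove: the step $\lambda_{\beta}\beta+\sum_{\Psi_{i,j}>\beta}\Psi_{i,j}=\lambda_{\beta}\beta$ is precisely the assertion that $\beta$ cannot be raised any further. So your direct verification is the more elementary and self-contained of the two, and it supplies the one computation the paper's proof leaves implicit. Your caveat about a possible $0/0$ in the coefficient formula when the partition has repeated parts is well taken, and your resolution --- that the raised pattern is simply not a valid basis element and therefore absent from the sum --- is the right one and consistent with how the paper treats admissibility of actions throughout Section \ref{3.3}.
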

\begin{proof}
Suppose for integers $m_1$, $m_2$, $...$, $m_n \text{ with }(m_1\geq   \cdots \geq m_n)$ that
\begin{align*}
\beta =\left(\begin{matrix}
m_1 && m_2 && m_3 &\cdots& m_n \\
&m_1 && m_2 && \cdots& \\
 && \ddots & & \ddots && \\
 && & m_1 & & &
\end{matrix}\right).
\end{align*}
Suppose $\exists$ some $\xi_i$ such that $$\xi_1 < \xi_2 < \cdots <\xi_s$$
and  $E_{i,i+1}\cdot\xi_1 \neq 0$ and $E_{i,i+1}\cdot\xi_2\neq 0$ and so on. Also, suppose that entries in both basis vectors $\xi_1$, $\xi_2$ are equal at the bottom, except for a certain row 
such that in that row, 
\begin{align*}
^1\xi_1+^2\xi_1+\cdots +^s\xi_1 < \hspace{.1cm} ^{a_1}\xi_2+^{a_2}\xi_2+\cdots +^{a_s}\xi_2 
\end{align*}
and that 
\begin{align*}
^1\xi_1<(^2\xi_1+\cdots +^s\xi_1) < ( ^{a_1}\xi_2+^{a_2}\xi_2+\cdots +^{a_s}\xi_2 ).
\end{align*}
We can write 
\begin{align*}
\xi_1=\left(v-\xi_i\right).
\end{align*}
The action 
\begin{align*}
E_{i,i+1}\cdot v = E_{i,i+1}\cdot \xi_1+E_{i,i+1}\cdot(v-\xi_1) = ^1\xi_1+ \sum_{\Psi_{i,j}>^1\xi_1}\Psi_{i,j}.
\end{align*}
Now, we have
 \begin{align*}
 E^{a(\xi_1)}\cdot v = E^{a(\xi_1)}\cdot \sum_{i} c_i\xi_i \text{ for } c_i\neq 0.
 \end{align*}
The sequence is actually raising the weight vectors  by the series of actions and the supposedly the smallest basis vector becomes a highest weight vector as a consequence.
So  \begin{align*}
 E^{a(\xi_1)}\cdot \sum_{i}c_i\xi_i=\lambda_{\beta}\beta +\sum_{\Psi_{i,j}>\beta}\Psi_{i,j}=\lambda_{\beta}\beta \text{ for } \lambda_{\beta}\neq 0.
\end{align*}
Therefore, $\beta$ is a highest weight vector.

The weight for $\beta$ is such that
\begin{eqnarray*}
H_{i,i}\cdot \beta &=& \left(\sum_{k=1}^im_{k,i} - \sum_{k=1}^{i-1}m_{k,i-1}\right)\cdot \beta \\
 &=& [q + (p_1+p_2-q)+(m_1+m_2+m_3-p_1-p_2)+\cdots \\
 && + q+(p_1+p_2-q)+ \cdots +(m_1+m_2+\cdots +m_n-p_1-p_2-\cdots -p_{n-1})]\cdot \beta \\
 &=& (c_1\varepsilon_1 + c_2\varepsilon_2 +c_3\varepsilon_3 + \cdots + c_{n-1}\varepsilon_{n-1} + c_n\varepsilon_n) \cdot \beta.
\end{eqnarray*}
So $\beta$ has weight 
 \begin{align*}
 c_1\varepsilon_1 + c_2\varepsilon_2 + \cdots + c_{n-1}\varepsilon_{n-1} + c_n\varepsilon_n,
 \end{align*}
which is a highest weight.

\end{proof}
\begin{pro}
\label{5.1.3}
 For any basis vector $\xi$, there exists a vector $a(\xi) \in \mathbb{Z}_{\geq 0}^N$, where $N = \frac{n(n-1)}{2}$, of upper triangular matrices such that
\begin{align*}
E^{a(\xi)}.\xi=\lambda_{\beta}\beta \text{ for } \lambda_{\beta}\neq 0,
\end{align*}
where 
\begin{align*}
E^{a(\xi)} = E_{1,2}^{a_N} \left(E_{2,3}^{a_{N-1}}E_{1,2}^{a_{N-2}}\right)\cdots\left(E_{n-1,n}^{a_{n-1}}\cdots E_{2,3}^{a_2}E_{1,2}^{a_1}\right)
\end{align*}
\end{pro}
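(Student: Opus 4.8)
The plan is to prove the statement in two logically independent parts: first that $E^{a(\xi)}.\xi$, for a suitably chosen exponent vector, is \emph{some} scalar multiple of $\beta$, and second that this scalar is nonzero. The first part is a weight computation, the second a positivity (no-cancellation) argument, and the combinatorial heart of the matter is the explicit construction of the exponents $a(\xi)$.

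First I would record three facts. (1) By Proposition \ref{5.1.2}, $\beta$ is a highest weight vector, and among all patterns with the fixed top row $m_1,\dots,m_n$ it is the \emph{unique} one of maximal weight: by Equation \eqref{3.0.6} the weight is determined by the row sums, each row sum is individually maximised exactly by $\beta$, and maximal row sums force every entry to be maximal through the betweenness conditions. Hence the weight space $R_{\mathrm{wt}(\beta)}$ is one-dimensional, spanned by $\beta$. (2) Each operator $E_{k,k+1}$ shifts the weight by the fixed root $\varepsilon_k-\varepsilon_{k+1}$, so any monomial $E^{a(\xi)}$ shifts weight by $\sum_k c_k(\varepsilon_k-\varepsilon_{k+1})$, where $c_k$ is the total power of $E_{k,k+1}$ in the monomial, independently of how the intermediate sums branch. (3) By the computations in Section \ref{3.3}, every single-box raising coefficient $a^{\,j}_{k-1,k}$ is a nonnegative real number, strictly positive precisely when the corresponding raise preserves the betweenness conditions.

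Next I would construct the exponents via an explicit admissible path. Writing $E^{a(\xi)}$ as the blocks $E_{1,2}^{a_N}\bigl(E_{2,3}^{a_{N-1}}E_{1,2}^{a_{N-2}}\bigr)\cdots\bigl(E_{n-1,n}^{a_{n-1}}\cdots E_{1,2}^{a_1}\bigr)$ and applying them from right to left, the $b$-th block (counted from the right) sweeps the rows from the bottom up to the $b$-th row. I would choose each exponent maximally, so that each block raises the highest row it reaches to the ceiling imposed by the already-fixed row above it; concretely, after the first block the row just below the top equals $(m_1,\dots,m_{n-1})$, and the $b$-th block saturates the $b$-th row. The upshot is a chain $\xi=\mu^{(0)}\to\mu^{(1)}\to\cdots\to\mu^{(M)}=\beta$ in which every arrow raises a single entry by one and respects betweenness. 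Grouping these elementary raises by the operator that performs them, in the prescribed order, reads off the exponents $a_1,\dots,a_N$ and shows $a(\xi)\in\mathbb{Z}_{\ge 0}^N$.

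Finally I would assemble the two conclusions. By fact (2), and since the path terminates at $\beta$, every term of $E^{a(\xi)}.\xi$ has weight $\mathrm{wt}(\beta)$; by fact (1) this forces $E^{a(\xi)}.\xi\in\mathbb{C}\beta$, say $E^{a(\xi)}.\xi=\lambda_\beta\beta$. To see $\lambda_\beta\neq 0$, expand $E^{a(\xi)}.\xi$ over the basis: the coefficient of $\beta$ is a sum, over all admissible paths from $\xi$ to $\beta$ compatible with the operator sequence, of the products of the corresponding single-box coefficients. By fact (3) each such product is nonnegative, and the explicit path built above contributes a \emph{strictly} positive product, so $\lambda_\beta>0$. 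I expect the main obstacle to be the third paragraph: showing that the greedy, maximal-exponent sweeps in exactly the prescribed reduced-word order do saturate one further row per block, and hence reach $\beta$, for all $n$ (the case $n=3$, with $a_1=p_1-q$, $a_2=(m_1-p_1)+(m_2-p_2)$ and $a_3=m_1-p_1$, is a useful check), together with the bookkeeping that no elementary raise along the chosen path ever violates betweenness.
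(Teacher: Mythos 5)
Your proposal is correct in outline and takes a genuinely different --- and substantially more careful --- route than the paper. The paper's own proof of Proposition \ref{5.1.3} is essentially a two-line assertion imported from the ordering set up in Proposition \ref{5.1.2}: it declares that $E^{a(\xi)}$ is ``the sequence that raises $\xi$ to $\beta$'' and that larger terms are annihilated, and it never confronts the fact that each application of $E_{k,k+1}$ branches a pattern into several patterns, so that $E^{a(\xi)}.\xi$ is a sum over many paths whose contributions to $\beta$ could in principle cancel. Your decomposition addresses exactly this: the weight computation (your facts (1) and (2)) pins $E^{a(\xi)}.\xi$ inside the one-dimensional weight space $\mathbb{C}\beta$ no matter how the intermediate terms branch, and the nonnegativity of the Gelfand--Tsetlin raising coefficients (your fact (3)) makes the coefficient of $\beta$ a sum of nonnegative path-products with at least one strictly positive contribution, so cancellation is impossible and $\lambda_\beta>0$. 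What your approach buys is a proof that is independent of any total ordering and that actually closes the no-cancellation gap left open in the paper; what it costs is the explicit greedy construction of the exponents, which you rightly identify as the remaining combinatorial work. That step is still a sketch for general $n$ (you prove it only implicitly, and check $n=3$, where your exponents reproduce the tuples in Example \ref{4.2.1}), but your own row-sum observation nearly finishes it: once a block's exponent for $E_{k,k+1}$ is chosen to equal the total deficit of row $k$ from its ceiling, the weight argument forces every surviving branch to have that row saturated, so all that remains is to verify that the action does not vanish prematurely, i.e.\ that at each intermediate stage some entry of the row being swept admits a raise compatible with the betweenness conditions. Spelling that induction out would make your argument fully rigorous, and even as it stands it is more complete than the proof in the text.
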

\begin{proof}
From the order 
 \begin{align*}
 \xi_1 < \xi_2 < \cdots <\xi_s
 \end{align*}
 introduced in Proposition \ref{5.1.2}, we see that $\xi_1$ is smaller than all other basis vectors. The action $$E^{a(\xi)}.\xi=E^{a(\xi_1)}\cdot \xi_1+E^{a(\xi_1)}\cdot\left(\sum_{\Psi_{i,j}>\xi_1}\Psi_{i,j}\right).$$
 But 
 $$\left(\sum_{\Psi_{i,j}>\xi_1}\Psi_{i,j}\right)$$ 
  will be annihilated by the action since its elements are bigger and $E^{a(\xi)}$ will be the sequence that raises $\xi_i$ to $\beta$, which is a highest weight.
\end{proof}
\begin{lem}
\label{5.1.4}
 Suppose $v$ is a non-zero element in $R$, $$v=\sum_{i=1}^sc_i\xi_i, \text{ with } c_i\neq 0 \in \mathbb{C}$$
Then there exists a sequence of upper triangular matrices such that

$$E^{a(v)}\cdot v=\lambda_{\beta}\beta \text{ where } \lambda_{\beta} \neq 0. $$
\end{lem}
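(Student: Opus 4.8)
My plan is to mirror the argument of Proposition \ref{5.1.3}: order the patterns occurring in $v$, pick the smallest one $\xi_{1}$, apply the sequence $E^{a(\xi_{1})}$ that Proposition \ref{5.1.3} already supplies (so $E^{a(\xi_{1})}\xi_{1}=\lambda_{\beta}\beta$ with $\lambda_{\beta}\neq 0$), and show that this \emph{same} sequence annihilates every other pattern in the support of $v$, so that $E^{a(\xi_{1})}v=c_{1}\lambda_{\beta}\beta$. The real content is the annihilation claim, and I would prove it by splitting the remaining support by weight. Write $\lambda$ for the weight of $\beta$ from Proposition \ref{5.1.2}; since $\beta$ is the unique pattern all of whose rows are maximal, $R_{\lambda}=\mathbb{C}\beta$ is one–dimensional, and every weight $\nu$ occurring in $R$ satisfies $\nu\preceq\lambda$ in the dominance order (where $\nu\preceq\lambda$ means $\lambda-\nu$ is a $\mathbb{Z}_{\geq 0}$–combination of the simple roots $\alpha_{k-1}=\varepsilon_{k-1}-\varepsilon_{k}$). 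I would choose the total order on patterns to refine this weight order, so that $\xi_{1}$ has a \emph{minimal} weight $\mu_{0}$ among the support weights.

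With $\gamma:=\lambda-\mu_{0}$, each $E_{k-1,k}$ shifts weight by $\alpha_{k-1}$ (equation \eqref{3.0.3}), so $E^{a(\xi_{1})}$ shifts weight by $\gamma$ and maps $R_{\mu}\to R_{\mu+\gamma}$. For any support vector of weight $\mu\neq\mu_{0}$, minimality of $\mu_{0}$ gives $\mu\not\preceq\mu_{0}$, hence
\[
\mu+\gamma=\lambda+(\mu-\mu_{0})\not\preceq\lambda .
\]
Because no weight of $R$ exceeds $\lambda$, the target space $R_{\mu+\gamma}=0$, so every support term of weight different from $\mu_{0}$ is killed \emph{for free}. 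Thus $E^{a(\xi_{1})}v=E^{a(\xi_{1})}v_{\mu_{0}}\in R_{\lambda}=\mathbb{C}\beta$ is automatically a multiple of $\beta$; I only have to rule out cancellation inside the single weight space $R_{\mu_{0}}$.

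This is the step I expect to be the main obstacle. If the $\mu_{0}$–component $v_{\mu_{0}}$ is a single pattern, Proposition \ref{5.1.3} finishes directly. In general $v_{\mu_{0}}$ is a nonzero combination of several patterns of the \emph{same} weight $\mu_{0}$, all of which $E^{a(\xi_{1})}$ sends into the one line $\mathbb{C}\beta$, so their images could in principle cancel. The clean way I would resolve this is through the contravariant structure already visible in the square–root coefficients of Section \ref{3.3}: the patterns form an orthonormal basis and the formulas make $F_{k,k-1}$ the adjoint of $E_{k-1,k}$. Then, writing $a^{\ast}$ for the reversed (lowering) sequence,
\[
\langle E^{a(\xi_{1})}v_{\mu_{0}},\,\beta\rangle=\langle v_{\mu_{0}},\,F^{a^{\ast}}\beta\rangle ,
\]
and as the weight–$\gamma$ sequences $a$ vary the vectors $F^{a^{\ast}}\beta$ run over the weight–$\mu_{0}$ part of $U(u^{-})\beta=R$, hence span $R_{\mu_{0}}$. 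Positive–definiteness of the form then forces some such pairing to be nonzero on $v_{\mu_{0}}$, giving a sequence with $E^{a(\xi_{1})}v_{\mu_{0}}=\lambda_{\beta}\beta$, $\lambda_{\beta}\neq 0$.

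Assembling the two pieces, the chosen $E^{a(v)}:=E^{a(\xi_{1})}$ satisfies $E^{a(v)}v=\lambda_{\beta}\beta$ with $\lambda_{\beta}\neq 0$, which is exactly the assertion of the lemma. I want to stress that the only delicate point is the non–cancellation within $R_{\mu_{0}}$; the higher–weight terms disappear automatically from the highest–weight bound. The single extra input beyond the earlier results is that $\beta$ generates $R$ under the lowering operators, which follows from the explicit $F$–action of Section \ref{3.3}. As an alternative to the adjointness route one can argue by induction, applying a single $E_{k-1,k}$ that raises the order–minimal pattern of $v_{\mu_{0}}$ to a pattern no other support term can reach, keeping the net coefficient nonzero and iterating; but I expect the orthonormal/adjoint argument to be the most economical and to avoid the fragile bookkeeping that this inductive variant requires.
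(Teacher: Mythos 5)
Your proposal follows the same skeleton as the paper's proof --- order the patterns in the support of $v$, single out the smallest one $\xi_1$, and hit $v$ with a raising sequence of total weight $\gamma=\lambda-\mathrm{wt}(\xi_1)$ --- but the way you justify that the other summands do not interfere is genuinely different from, and considerably more substantial than, what the paper does. The paper simply asserts that because $\xi_1$ is smallest the remaining terms are carried to patterns ``bigger than $\beta$'' and hence vanish; it never separates the two cases your argument distinguishes. Your dominance computation ($R_{\mu+\gamma}=0$ whenever $\mu\not\preceq\mu_0$, since every weight of $R$ lies below $\lambda$) is a clean and correct way to kill every support term whose weight differs from $\mu_0$, and it works for \emph{any} monomial of total weight $\gamma$, not just the one adapted to $\xi_1$; your identification of the same-weight terms as the real danger --- several weight-$\mu_0$ patterns all mapping into the single line $\mathbb{C}\beta$ and possibly cancelling --- is precisely the point the paper glosses over, and the contravariant-form/adjointness argument (the symmetry $F_{k,k-1}=E_{k-1,k}^{T}$ is indeed visible in the Gelfand--Tsetlin coefficients of Section \ref{3.3}) is a legitimate way to resolve it. The one input you do not discharge is the spanning claim: to guarantee that some weight-$\gamma$ monomial $F^{a^{*}}\beta$ pairs nontrivially with $v_{\mu_0}$ you need the $F$-monomials applied to $\beta$ to span $R_{\mu_0}$, i.e.\ $U(u^-)\beta=R$. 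This cannot be imported from simplicity (that would be circular here), and it does not quite ``follow from the explicit $F$-action'' without an argument symmetric to Proposition \ref{5.1.3} --- essentially what the paper later attempts in Lemma \ref{5.1.6}. Supply that lowering analogue first (or fall back on the inductive variant you sketch), and your proof is complete and noticeably more rigorous than the one in the paper.
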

\begin{proof}
From Proposition \ref{5.1.2}, for $\xi_1 < \xi_2$, we established that
$$^1\xi_1<(^2\xi_1+\cdots +^s\xi_1) < ( ^{a_1}\xi_2+^{a_2}\xi_2+\cdots +^{a_s}\xi_2 ).$$ 
Then, for all $c_i\neq 0$, 
\begin{align*}
E^{a(\xi_i)}\cdot v & = E^{a(\xi_i)} \cdot \left(\sum_{i=1}^s {c_i} {\xi_i} \right)\\
&= E^{a(\xi_i)}\cdot \left(c_1 {^1\xi_1} + \sum_{{\xi_i}> {^1\xi_1}}c_i {\xi_i} \right).
\end{align*}
Since $^1\xi_1$ is the smallest basis, the action will be 
\begin{align*}
E^{a(\xi_i)}\cdot v & =\lambda_{\beta}\beta +  \sum_{\Gamma_{i,j}>\beta}\lambda_i\Gamma_{i,j}\\
&=\lambda_{\beta}\beta.
\end{align*}
Therefore,
$$E^{a(\xi_i)}\cdot v=\lambda_{\beta}\beta. $$
\end{proof}
This implies 
\begin{cor}
If $S \subset R$ is a non-zero submodule, then $\beta \in S$.
\end{cor}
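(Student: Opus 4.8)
The plan is to treat this corollary as an essentially immediate consequence of Lemma \ref{5.1.4}, with the only real content being the observation that a submodule is closed under the relevant operators. Since $S \neq \{0\}$, I would begin by choosing some non-zero element $v \in S$. By the definition of the module $R$, such a $v$ can be written as a finite combination $v = \sum_{i=1}^s c_i \xi_i$ with all $c_i \neq 0$, so the hypotheses of Lemma \ref{5.1.4} are met.

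Next I would invoke Lemma \ref{5.1.4} directly: there is a sequence of upper triangular matrices $E^{a(v)}$ such that
\begin{align*}
E^{a(v)} \cdot v = \lambda_{\beta}\beta, \qquad \lambda_{\beta} \neq 0.
\end{align*}
The key step is then to argue that $E^{a(v)} \cdot v$ still lies in $S$. Because $S$ is a submodule, it is stable under the action of every element of $sl_n$, and in particular under each generator $E_{i,i+1}$. Since $E^{a(v)}$ is by construction a product (iterated application) of such raising operators, applying it to $v \in S$ keeps us inside $S$ at every stage. Hence $\lambda_{\beta}\beta = E^{a(v)} \cdot v \in S$.

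Finally, because $S$ is in particular a vector subspace of $R$ and $\lambda_{\beta} \neq 0$, I may scale: $\beta = \lambda_{\beta}^{-1}(\lambda_{\beta}\beta) \in S$. This yields $\beta \in S$, as desired. I do not expect any genuine obstacle here, since Lemma \ref{5.1.4} has already done the substantive work of producing an explicit annihilating-and-raising sequence that collapses an arbitrary non-zero vector onto a non-zero multiple of $\beta$. The only point demanding any care is the closure claim, and there the thing worth stating plainly is that submodule-stability under the individual generators $E_{i,i+1}$ propagates to the whole word $E^{a(v)}$ by induction on its length; everything else is just the vector-space structure of $S$.
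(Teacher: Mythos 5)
Your proof is correct, but it is not the argument the paper gives, and the difference matters. You derive the corollary the way it should be derived: pick a non-zero $v\in S$, expand it in the basis with non-zero coefficients so that Lemma \ref{5.1.4} applies, obtain $E^{a(v)}\cdot v=\lambda_{\beta}\beta$ with $\lambda_{\beta}\neq 0$, observe that $S$ is stable under each generator $E_{i,i+1}$ and hence under the whole word $E^{a(v)}$ by induction on its length, and finally rescale inside the subspace $S$. The paper instead writes: ``there is a highest weight vector $\beta\in R$; so if $S\subset R$ is a non-zero submodule, then $S=R$; this implies $\beta\in S$.'' That reasoning is backwards --- it assumes the conclusion $S=R$ of Theorem \ref{5.1.1} (simplicity) in order to deduce $\beta\in S$, when the corollary is supposed to be an intermediate step \emph{toward} $S=R$: one first shows $\beta\in S$, then uses Lemma \ref{5.1.6} (that $\beta$ generates $R$) to conclude $S=R$. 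So your route is not merely an alternative; it is the logically sound version that makes the chain of results non-circular, and the only genuinely new ingredient you add beyond Lemma \ref{5.1.4} --- closure of $S$ under the raising operators --- is exactly the point the paper's version skips over.
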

We proved from Proposition \ref{5.1.2} that there is a highest weight vector $\beta \in R$. So if $S \subset R$ is a non-zero submodule, then $S=R$. This implies that $\beta \in S$.
\begin{lem}\label{5.1.6}
Let $$ B = \left\{ \underbar{a}(\xi) \mid \xi \text{ is a basis element} \right\}.$$
 Then $$\left\{\underbar{F}^{a(\xi_i)}\cdot \beta \mid a(\xi)\in B\right\}$$ 
 is a basis.
\end{lem}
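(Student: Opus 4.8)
The plan is to prove that the vectors $v_\xi := \underbar{F}^{a(\xi)}\cdot\beta$ are linearly independent; since their number equals the number of Gelfand--Tsetlin patterns, which is $\dim R$, independence will force them to span and hence to be a basis. First I would observe that the problem decouples across weight spaces. Each monomial $E^{a(\xi)}$ is a product of the root vectors $E_{i,i+1}$, so as a weight operator it raises weight by a fixed amount $\nu(\xi)$; since $E^{a(\xi)}\xi=\lambda_\beta\beta$ by Proposition~\ref{5.1.3}, we must have $\nu(\xi)=\mathrm{wt}(\beta)-\mathrm{wt}(\xi)$. The lowering monomial $\underbar{F}^{a(\xi)}$ uses the same exponents on the $F_{i+1,i}$, hence lowers weight by exactly the same $\nu(\xi)$, so $v_\xi$ lies in the weight space $R_{\mathrm{wt}(\xi)}$. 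Thus the candidate vectors $v_\xi$ are grouped by weight exactly as the patterns $\xi$ are, and it suffices to prove independence inside each weight space $R_\mu$, where the number of candidate vectors already matches $\dim R_\mu$.

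Next I would exploit the inner product $\langle\cdot,\cdot\rangle$ on $R$ for which the Gelfand--Tsetlin basis is orthonormal. The square-root coefficients in the formulas for $E_{k-1,k}$ and $F_{k,k-1}$ are matched precisely so that $F_{k,k-1}=(E_{k-1,k})^\ast$; one checks this directly from the $n=3$ formulas, and it holds in general. Consequently $\underbar{F}^{a(\xi)}$ is the adjoint of $E^{a(\xi)}$ (with the factors taken in the opposite order), so for any pattern $\xi'$ of the same weight as $\xi$,
\begin{align*}
\langle v_\xi,\xi'\rangle=\langle\beta,\,E^{a(\xi)}\xi'\rangle.
\end{align*}
Here Proposition~\ref{5.1.3} does the real work: the sequence $a(\xi)$ is tuned to raise $\xi$ to $\lambda_\beta\beta$ while annihilating every basis vector strictly larger than $\xi$ in the order $\xi_1<\xi_2<\cdots$ of Proposition~\ref{5.1.2}. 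Hence $\langle\beta,E^{a(\xi)}\xi\rangle=\lambda_\beta\neq0$, while $E^{a(\xi)}\xi'=0$ whenever $\xi'>\xi$, so $\langle v_\xi,\xi'\rangle=0$ for every such $\xi'$.

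I would then organise these pairings into a matrix. Fixing $\mu$ and listing the patterns of weight $\mu$ in increasing order $\xi^{(1)}<\cdots<\xi^{(d)}$, the matrix $G_{k,l}=\langle v_{\xi^{(k)}},\xi^{(l)}\rangle$ is exactly the matrix expressing the $v_{\xi^{(k)}}$ in the orthonormal basis $\{\xi^{(l)}\}$ of $R_\mu$. By the previous paragraph $G$ is triangular---its entries vanish whenever $\xi^{(l)}>\xi^{(k)}$---with nonzero diagonal entries $\lambda_\beta$, hence invertible. Therefore the $v_{\xi^{(k)}}$ form a basis of $R_\mu$; taking the union over all $\mu$ and using $R=\bigoplus_\mu R_\mu$ shows that $\{v_\xi\}$ is a basis of $R$. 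In particular the $v_\xi$ are pairwise distinct, so $\xi\mapsto a(\xi)$ is injective and $B$ indexes them without repetition, which is exactly the assertion of the lemma.

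The main obstacle is the annihilation property underlying the triangularity: I must be certain that the single exponent vector $a(\xi)$ simultaneously raises $\xi$ to a nonzero multiple of $\beta$ and kills every strictly larger pattern of the same weight. This is precisely what I am importing from Proposition~\ref{5.1.3}, and its delicate point is that the order on patterns be a genuine total order within each weight space along which the maximal raising sequence behaves monotonically. Verifying that the order of Proposition~\ref{5.1.2} has this property, together with the adjointness $F_{k,k-1}=(E_{k-1,k})^\ast$ for all $n$ rather than merely $n=3$, is where the real care is needed.
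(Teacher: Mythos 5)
Your proposal is correct in outline and ends at the same two-stage conclusion as the paper's proof (linear independence first, then a count against $\dim R$ to upgrade to spanning), but your mechanism for independence is genuinely different. The paper works directly on the lowering side: it asserts that $F^{\underbar{a}(\xi_i)}\cdot\beta$ expands in the Gelfand--Tsetlin basis as a distinguished leading pattern $\chi_i$ plus strictly larger terms, and then runs a smallest-leading-term argument on a vanishing linear combination $\sum_i c_i F^{\underbar{a}(\xi_i)}\cdot\beta=0$. You instead transport the triangularity over from the raising side, which is where the paper actually did its work (Propositions \ref{5.1.2} and \ref{5.1.3}): using orthonormality of the Gelfand--Tsetlin basis and the adjointness $F_{k,k-1}=(E_{k-1,k})^{\ast}$ you rewrite $\langle \underbar{F}^{a(\xi)}\cdot\beta,\,\xi'\rangle$ as $\langle\beta,\,E^{a(\xi)}\xi'\rangle$ and read off a triangular pairing matrix with nonzero diagonal $\lambda_\beta$. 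This buys you something real: the paper's leading-term claim for the $F$-monomials is a second, independent triangularity assertion that it justifies only by analogy with the $E$-side, whereas you need only the one already established for the $E$-monomials. The price is that you must invoke the unitarity of the Gelfand--Tsetlin formulas, which is classical and true but nowhere stated in the paper; to make it airtight you should check that the radicands are positive for general $n$ (so the coefficients are real and adjointness is literal), and that $\underbar{F}^{a(\xi)}$ is, by the paper's convention, the reverse-ordered word, so that its adjoint really is $E^{a(\xi)}$ and not the same word in the original order. Your weight-space decomposition is a further refinement the paper does not use, though it is not strictly needed once a global total order is fixed. Both arguments ultimately stand or fall with the annihilation property of Proposition \ref{5.1.3} (that $E^{a(\xi)}$ kills every strictly larger pattern while sending $\xi$ to $\lambda_\beta\beta$), which you correctly flag as the imported delicate point rather than something your argument supplies.
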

\begin{proof}

Now, we want to show that $R$ is generated by $\beta$.

Form the ordering in Proposition \ref{5.1.2}, we see that at least  
$$F_{i+1,i}\cdot ^1\xi_1< F_{i+1,i}\cdot \xi_2.$$
Also, for $j\neq i$
$$F_{j+1,j} \cdot ^1\xi_1< F_{j+1,j}\cdot \xi_2.$$
Therefore, we can write $$F^{\underbar{a}(\xi_i)}\cdot \beta=\chi_i + \sum_{\gamma_{i,j}>\chi_i}\gamma_{i,j}$$
 where $\xi_1 = \chi_i$ and $\xi_2 = \sum_{\gamma_{i,j}>\chi_i}\gamma_{i,j}$.
 
Suppose 
$$\sum_{i=1}^Nc_i F^{\underbar{a}(\xi_i)}\cdot \beta=0$$
and all $c_i\neq 0$. Then $$\sum_{i=1}^Nc_i\left(\chi_i + \sum_{\gamma_{i,j}>\chi_i}\gamma_{i,j}\right)=0.$$ We can fix $\chi_i$ such that $\chi_1 < \chi_2 < \cdots < \chi_N$. So $$\sum_{i=1}^Nc_i\left(\chi_i + \sum_{\gamma_{i,j}>\chi_i}\gamma_{i,j}\right)= c_1\chi_1 + \sum_{i=2}^Nc_i\left(\chi_i + \sum_{\gamma_{i,j}>\chi_i}\gamma_{i,j}\right)+\gamma_1=0.$$ We know $\chi_1$ is smaller than everything and $\{\chi\}$ are linearly independent, then $c_1=0$. Then 
$$\left\{F^{\underbar{a}(\xi_i)}\cdot \beta \mid\underbar{a}(\xi)\in B\right\}$$ 
is linearly independent.

We are given that ${\xi_i}$ is a basis of $R$ implying $\xi$ is a basis element. The cardinality of $\xi$ is $D$ (that is $\dim R$, in other words the number of basis vectors one can make from a given partition). Since $\xi$ has $N$ linearly independent elements, then
\begin{align*}
\dim \left\langle F^{\underbar{a}(\xi_i)}\cdot \beta \right\rangle = \dim R = D.
\end{align*}
So 
\begin{align*}
\left\{F^{\underbar{a}(\xi_i)}\cdot \beta \mid\underbar{a}(\xi)\in B\right\}
\end{align*}
 spans and is a basis in $R$. Since 
 \begin{align*}
 \left\{F^{\underbar{a}(\xi_i)}\cdot \beta \mid\underbar{a}(\xi)\in B\right\}
\end{align*}   
spans $R$ and all its elements are linearly independent, then it is all of $R$. Therefore, the weight vector $\beta$ generates all of $R$.  
\end{proof}
From the above proofs, we can make out that if $\beta$ is a highest weight vector, $S$ a submodule of $R$ (i.e $\beta \in S$ and $S$ is all of $R$) then $\beta$ generates all of $R$. Therefore, there is no invariant subspace of $R$. 
\begin{cor}
$R$ is generated by $\beta$, and moreover if $S \subset R$ is a non-zero submodule, then $S = R$.
\end{cor}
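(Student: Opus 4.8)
The plan is to deduce both assertions from the machinery already assembled in this section, so that no new estimates on the structure constants are needed. First I would prove that $\beta$ generates $R$, and then I would read off the submodule statement as an immediate consequence of this together with the preceding Corollary.

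For the generation claim, I would write $\langle \beta \rangle$ for the submodule of $R$ generated by $\beta$, that is, the smallest $sl_n$-submodule containing $\beta$ (equivalently, the span of all vectors obtained from $\beta$ by iterated application of elements of $sl_n$). Since each lowering matrix $F_{i+1,i}$ lies in $sl_n$, every product $F^{\underbar{a}(\xi_i)}$ acts within $\langle \beta \rangle$, so that $F^{\underbar{a}(\xi_i)} \cdot \beta \in \langle \beta \rangle$ for each basis element $\xi_i$. By Lemma \ref{5.1.6} the family $\{F^{\underbar{a}(\xi_i)} \cdot \beta \mid \underbar{a}(\xi) \in B\}$ is a basis of $R$, so $\langle \beta \rangle$ contains a basis of $R$ and therefore $\langle \beta \rangle = R$. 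This establishes that $\beta$ generates $R$.

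For the second assertion, I would take $S \subset R$ to be any non-zero submodule. By the preceding Corollary we have $\beta \in S$, and since $S$ is closed under the action of $sl_n$ it must contain the whole submodule generated by $\beta$, namely $\langle \beta \rangle = R$ by the first part. Thus $R \subseteq S \subseteq R$, which gives $S = R$.

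The substantive work is carried entirely by the earlier results, so the hard part is already behind us: Lemma \ref{5.1.4} (together with Proposition \ref{5.1.2}) shows that any non-zero vector can be pushed up to a non-zero multiple of $\beta$ by a suitable raising sequence, which forces $\beta$ into every non-zero submodule, while Lemma \ref{5.1.6} shows that lowering sequences applied to $\beta$ recover a full basis. The only point in the present argument that will require care is the conceptual one of interpreting ``generated by $\beta$'' correctly as the smallest submodule containing $\beta$, and of verifying that each basis vector furnished by Lemma \ref{5.1.6} genuinely lies inside it; once this is in place the remaining steps are pure bookkeeping and present no further obstacle.
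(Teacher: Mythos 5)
Your proof is correct and follows essentially the same route as the paper, which (implicitly, in the discussion surrounding Lemma \ref{5.1.6} and the preceding corollary) deduces generation from the fact that the lowered vectors $F^{\underbar{a}(\xi_i)}\cdot\beta$ form a basis lying in the submodule generated by $\beta$, and then gets $S=R$ from $\beta\in S$. Your write-up is in fact more careful than the paper's, which states the corollary without a separate proof.
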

These completes the proof for Theorem \ref{5.1.1}. So, the representation space $R$ is a simple $sl_n-$module. Already, a monomial basis is constructed in Lemma \ref{5.1.6} as part of the proof for the theorem above and we will treat in details the procedure of forming these monomial basis. 
\section{Monomial basis}\label{5.2} We have now constructed a monomial basis in Lemma \ref{5.1.6}. But we have made a choice, by fixing the elements and their order on how to be applied to the highest weight vector.
 
\begin{exa}\label{4.2.1} Let $m_1=2,m_2=1,m_3=0$. We apply the sequences 
$$E_{1,2}^{a_1}E_{2,3}^{a_2}E_{1,2}^{a_3}$$
on $\xi$:

\begin{eqnarray*}
 E_{1,2}^{0}E_{2,3}^{0}E_{1,2}^{0}\cdot \left(\begin{matrix}
2 && 1 \\
 & 2 &
 \end{matrix}\right) &\text{ has }& (0,0,0) \hspace{.5cm} \Rightarrow F_{1,2}^{0}F_{2,3}^{0}F_{1,2}^{0} \text{ is a monomial,}\\\\
 E_{1,2}^{0}E_{2,3}^{1}E_{1,2}^{0}\cdot \left(\begin{matrix}
2 && 0 \\
 & 2 &
 \end{matrix}\right) &\text{ has }& (0,1,0)\hspace{.5cm} \Rightarrow F_{1,2}^{0}F_{2,3}^{1}F_{1,2}^{0} \text{ is a monomial,}\\\\
E_{1,2}^{0}E_{2,3}^{0}E_{1,2}^{1}\cdot \left(\begin{matrix}
2 && 1 \\
 & 1 &
 \end{matrix}\right) &\text{ has }& (0,0,1) \hspace{.5cm}\Rightarrow F_{1,2}^{1}F_{2,3}^{0}F_{1,2}^{0} \text{ is a monomial,}\\\\
E_{1,2}^{0}E_{2,3}^{1}E_{1,2}^{1}\cdot \left(\begin{matrix}
2 && 0 \\
 & 1 &
 \end{matrix}\right) &\text{ has }& (0,1,1)\hspace{.5cm} \Rightarrow F_{1,2}^{1}F_{2,3}^{1}F_{1,2}^{0} \text{ is a monomial,}\\\\
E_{1,2}^{0}E_{2,3}^{1}E_{1,2}^{2}\cdot \left(\begin{matrix}
2 && 0 \\
 & 0 &
 \end{matrix}\right) &\text{ has }& (0,1,2)\hspace{.5cm} \Rightarrow F_{1,2}^{2}F_{2,3}^{1}F_{1,2}^{0} \text{ is a monomial,}\\\\
E_{1,2}^{1}E_{2,3}^{1}E_{1,2}^{0}\cdot \left(\begin{matrix}
1 && 1 \\
 & 1 &
 \end{matrix}\right) &\text{ has }& (1,1,0)\hspace{.5cm} \Rightarrow F_{1,2}^{0}F_{2,3}^{1}F_{1,2}^{1} \text{ is a monomial,}\\\\
E_{1,2}^{1}E_{2,3}^{2}E_{1,2}^{0}\cdot \left(\begin{matrix}
1 && 0 \\
 & 1 &
 \end{matrix}\right) &\text{ has }& (1,2,0) \hspace{.5cm}\Rightarrow F_{1,2}^{0}F_{2,3}^{2}F_{1,2}^{1} \text{ is a monomial,}\\\\
 E_{1,2}^{1}E_{2,3}^{2}E_{1,2}^{1}\cdot \left(\begin{matrix}
1 && 0 \\
 & 0 &
 \end{matrix}\right) &\text{ has }& (1,2,1) \hspace{.5cm} \Rightarrow F_{1,2}^{1}F_{2,3}^{2}F_{1,2}^{1} \text{ is a monomial.}\\\\
\end{eqnarray*}
So
\begin{align*}
&\left\{F_{1,2}^{0}F_{2,3}^{0}F_{1,2}^{0},F_{1,2}^{0}F_{2,3}^{1}F_{1,2}^{0}, F_{1,2}^{1}F_{2,3}^{0}F_{1,2}^{0}, F_{1,2}^{1}F_{2,3}^{1}F_{1,2}^{0}, F_{1,2}^{2}F_{2,3}^{1}F_{1,2}^{0}, F_{1,2}^{0}F_{2,3}^{1}F_{1,2}^{1},\right.\\
& \left. F_{1,2}^{0}F_{2,3}^{2}F_{1,2}^{1},  F_{1,2}^{1}F_{2,3}^{2}F_{1,2}^{1} \right\}
\end{align*}  
is a monomial basis.

Also, the action of  
$$  E_{2,3}^{b_1}E_{1,2}^{b_2}E_{2,3}^{b_3}$$ 
on $\xi$ is demonstrated below.

\begin{eqnarray*}
E_{2,3}^{0}E_{1,2}^{0}E_{2,3}^{0}\cdot \left(\begin{matrix}
2 && 1 \\
 & 2 &
 \end{matrix}\right) &\text{ has }& (0,0,0)\hspace{.5cm} \Rightarrow F_{2,3}^{0}F_{1,2}^{0}F_{2,3}^{0} \text{ is a monomial,}\\\\
E_{2,3}^{0}E_{1,2}^{0}E_{2,3}^{1}\cdot \left(\begin{matrix}
2 && 0 \\
 & 2 &
 \end{matrix}\right) &\text{ has }& (0,0,1)\hspace{.5cm} \Rightarrow F_{2,3}^{1}F_{1,2}^{0}F_{2,3}^{0} \text{ is a monomial,}\\\\
E_{2,3}^{0}E_{1,2}^{1}E_{2,3}^{0}\cdot \left(\begin{matrix}
2 && 1 \\
 & 1 &
 \end{matrix}\right) &\text{ has }& (0,1,0)\hspace{.5cm} \Rightarrow F_{2,3}^{0}F_{1,2}^{1}F_{2,3}^{0} \text{ is a monomial,}\\\\
 E_{2,3}^{0}E_{1,2}^{1}E_{2,3}^{1}\cdot \left(\begin{matrix}
2 && 0 \\
 & 1 &
 \end{matrix}\right) &\text{ has }& (0,1,1)\hspace{.5cm} \Rightarrow F_{2,3}^{1}F_{1,2}^{1}F_{2,3}^{0} \text{ is a monomial,}\\\\
E_{2,3}^{1}E_{1,2}^{2}E_{2,3}^{0}\cdot \left(\begin{matrix}
2 && 0 \\
 & 0 &
 \end{matrix}\right) &\text{ has }& (1,2,0)\hspace{.5cm} \Rightarrow F_{2,3}^{0}F_{1,2}^{2}F_{2,3}^{1} \text{ is a monomial,}\\\\
E_{2,3}^{0}E_{1,2}^{1}E_{2,3}^{1}\cdot \left(\begin{matrix}
1 && 1 \\
 & 1 &
 \end{matrix}\right) &\text{ has }& (0,1,1)\hspace{.5cm} \Rightarrow F_{2,3}^{1}F_{1,2}^{1}F_{2,3}^{0} \text{ is a monomial,}\\\\
E_{2,3}^{0}E_{1,2}^{1}E_{2,3}^{2}\cdot \left(\begin{matrix}
1 && 0 \\
 & 1 & 
 \end{matrix}\right) &\text{ has }& (0,1,2)\hspace{.5cm} \Rightarrow F_{2,3}^{2}F_{1,2}^{1}F_{2,3}^{0} \text{ is a monomial,}\\\\
E_{2,3}^{1}E_{1,2}^{2}E_{2,3}^{1}\cdot \left(\begin{matrix}
1 && 0 \\
 & 0 &
 \end{matrix}\right) &\text{ has }& (1,2,1)\hspace{.5cm} \Rightarrow F_{2,3}^{1}F_{1,2}^{2}F_{2,3}^{1} \text{ is a monomial.}\\
\end{eqnarray*}
So 
\begin{align*}
&\left\{ F_{2,3}^{0}F_{1,2}^{0}F_{2,3}^{0}, F_{2,3}^{1}F_{1,2}^{0}F_{2,3}^{0}, F_{2,3}^{0}F_{1,2}^{1}F_{2,3}^{0}, F_{2,3}^{1}F_{1,2}^{1}F_{2,3}^{0}, F_{2,3}^{0}F_{1,2}^{2}F_{2,3}^{1},
F_{2,3}^{2}F_{1,2}^{1}F_{2,3}^{0}, F_{2,3}^{1}F_{1,2}^{2}F_{2,3}^{1}  \right\}
\end{align*}
is a monomial basis.
\end{exa} 
We see that different sequences make different monomial basis. Since the partition in Example \ref{4.2.1} is symmetric, we do another example.

\begin{exa} Let $m_1=3,m_2=2,m_3=0$. We apply the sequences 
$$E_{1,2}^{a_1}E_{2,3}^{a_2}E_{1,2}^{a_3}$$
on $\xi$ to find a monomial.

\begin{eqnarray*}
 E_{1,2}^{0}E_{2,3}^{0}E_{1,2}^{0}\cdot \left(\begin{matrix}
3 && 2 \\
 & 3 &
 \end{matrix}\right) &\text{ has }& (0,0,0)\hspace{.5cm} \Rightarrow F_{1,2}^{0}F_{2,3}^{0}F_{1,2}^{0} \text{ is a monomial,}\\\\
 E_{1,2}^{0}E_{2,3}^{1}E_{1,2}^{0}\cdot \left(\begin{matrix}
3 && 1 \\
 & 3 &
 \end{matrix}\right) &\text{ has }& (0,1,0) \hspace{.5cm}\Rightarrow F_{1,2}^{0}F_{2,3}^{1}F_{1,2}^{0} \text{ is a monomial,}\\\\
E_{1,2}^{0}E_{2,3}^{2}E_{1,2}^{0}\cdot \left(\begin{matrix}
3 && 0 \\
 & 3 &
 \end{matrix}\right) &\text{ has }& (0,2,0)\hspace{.5cm} \Rightarrow F_{1,2}^{0}F_{2,3}^{2}F_{1,2}^{0} \text{ is a monomial,}\\\\
  E_{1,2}^{0}E_{2,3}^{0}E_{1,2}^{1}\cdot \left(\begin{matrix}
3 && 2 \\
 & 2 &
 \end{matrix}\right) &\text{ has }& (0,0,1)\hspace{.5cm} \Rightarrow F_{1,2}^{1}F_{2,3}^{0}F_{1,2}^{0} \text{ is a monomial,}\\\\
E_{1,2}^{0}E_{2,3}^{1}E_{1,2}^{1}\cdot \left(\begin{matrix}
3 && 1 \\
 & 2 &
 \end{matrix}\right) &\text{ has }& (0,1,1)\hspace{.5cm} \Rightarrow F_{1,2}^{1}F_{2,3}^{1}F_{1,2}^{0} \text{ is a monomial,}\\\\
E_{1,2}^{0}E_{2,3}^{2}E_{1,2}^{1}\cdot \left(\begin{matrix}
3 && 0 \\
 & 2 &
 \end{matrix}\right) &\text{ has }& (0,2,1)\hspace{.5cm} \Rightarrow F_{1,2}^{1}F_{2,3}^{2}F_{1,2}^{0} \text{ is a monomial,}\\\\
E_{1,2}^{0}E_{2,3}^{1}E_{1,2}^{2}\cdot \left(\begin{matrix}
3 && 1 \\
 & 1 &
 \end{matrix}\right) &\text{ has }& (0,1,2)\hspace{.5cm} \Rightarrow F_{1,2}^{2}F_{2,3}^{1}F_{1,2}^{0} \text{ is a monomial,}\\\\
 E_{1,2}^{0}E_{2,3}^{2}E_{1,2}^{2}\cdot \left(\begin{matrix}
3 && 0 \\
 & 1 &
 \end{matrix}\right) &\text{ has }& (0,2,2)\hspace{.5cm} \Rightarrow F_{1,2}^{2}F_{2,3}^{2}F_{1,2}^{0} \text{ is a monomial.}\\\\
 E_{1,2}^{0}E_{2,3}^{2}E_{1,2}^{3}\cdot \left(\begin{matrix}
3 && 0 \\
 & 0 &
 \end{matrix}\right) &\text{ has }& (0,2,3)\hspace{.5cm} \Rightarrow F_{1,2}^{3}F_{2,3}^{2}F_{1,2}^{0} \text{ is a monomial,}\\\\
 E_{1,2}^{1}E_{2,3}^{1}E_{1,2}^{0}\cdot \left(\begin{matrix}
2 && 2 \\
 & 2 &
 \end{matrix}\right) &\text{ has }& (1,1,0)\hspace{.5cm} \Rightarrow F_{1,2}^{0}F_{2,3}^{1}F_{1,2}^{1} \text{ is a monomial,}\\\\
E_{1,2}^{1}E_{2,3}^{2}E_{1,2}^{0}\cdot \left(\begin{matrix}
2 && 1 \\
 & 2 &
 \end{matrix}\right) &\text{ has }& (1,2,0)\hspace{.5cm} \Rightarrow F_{1,2}^{0}F_{2,3}^{2}F_{1,2}^{1} \text{ is a monomial,}\\\\
E_{1,2}^{1}E_{2,3}^{3}E_{1,2}^{0}\cdot \left(\begin{matrix}
2 && 0 \\
 & 2 &
 \end{matrix}\right) &\text{ has }& (1,3,0)\hspace{.5cm} \Rightarrow F_{1,2}^{0}F_{2,3}^{3}F_{1,2}^{1} \text{ is a monomial,}\\\\
E_{1,2}^{1}E_{2,3}^{2}E_{1,2}^{1}\cdot \left(\begin{matrix}
2 && 1 \\
 & 1 &
 \end{matrix}\right) &\text{ has }& (1,2,1)\hspace{.5cm} \Rightarrow F_{1,2}^{1}F_{2,3}^{2}F_{1,2}^{1} \text{ is a monomial,}\\\\
E_{1,2}^{1}E_{2,3}^{3}E_{1,2}^{1}\cdot \left(\begin{matrix}
2 && 0 \\
 & 1 &
 \end{matrix}\right) &\text{ has }& (1,3,1) \hspace{.5cm} \Rightarrow F_{1,2}^{1}F_{2,3}^{3}F_{1,2}^{1} \text{ is a monomial,}\\\\
E_{1,2}^{1}E_{2,3}^{3}E_{1,2}^{2}\cdot \left(\begin{matrix}
2 && 0 \\
 & 0 &
 \end{matrix}\right) &\text{ has }& (1,3,2)\hspace{.5cm} \Rightarrow F_{1,2}^{2}F_{2,3}^{3}F_{1,2}^{1} \text{ is a monomial,}
\end{eqnarray*}
So 
\begin{align*}
& \left\{ F_{1,2}^{0}F_{2,3}^{0}F_{1,2}^{0}, F_{1,2}^{0}F_{2,3}^{1}F_{1,2}^{0}, F_{1,2}^{0}F_{2,3}^{2}F_{1,2}^{0}, F_{1,2}^{1}F_{2,3}^{0}F_{1,2}^{0}, F_{1,2}^{1}F_{2,3}^{1}F_{1,2}^{0}, F_{1,2}^{1}F_{2,3}^{2}F_{1,2}^{0}, F_{1,2}^{2}F_{2,3}^{1}F_{1,2}^{0}, \right.\\ & \left. F_{1,2}^{2}F_{2,3}^{2}F_{1,2}^{0}, F_{1,2}^{3}F_{2,3}^{2}F_{1,2}^{0}, F_{1,2}^{0}F_{2,3}^{1}F_{1,2}^{1} , F_{1,2}^{0}F_{2,3}^{2}F_{1,2}^{1}, F_{1,2}^{0}F_{2,3}^{3}F_{1,2}^{1}, F_{1,2}^{1}F_{2,3}^{2}F_{1,2}^{1}, F_{1,2}^{1}F_{2,3}^{3}F_{1,2}^{1},\right. \\
& \left. F_{1,2}^{2}F_{2,3}^{3}F_{1,2}^{1} \right\}
\end{align*}
is a monomial basis.

Now, the action of 
$$  E_{2,3}^{b_1}E_{1,2}^{b_2}E_{2,3}^{b_3}$$ 
on $\xi$:

\begin{eqnarray*}
E_{2,3}^{0}E_{1,2}^{0}E_{2,3}^{0}\cdot \left(\begin{matrix}
3 && 2 \\
 & 3 &
 \end{matrix}\right) &\text{ has }& (0,0,0)\hspace{.5cm} \Rightarrow F_{2,3}^{0}F_{1,2}^{0}F_{2,3}^{0} \text{ is a monomial,}\\\\
E_{2,3}^{0}E_{1,2}^{0}E_{2,3}^{1}\cdot \left(\begin{matrix}
3 && 1 \\
 & 3 &
 \end{matrix}\right) &\text{ has }& (0,0,1)\hspace{.5cm} \Rightarrow F_{2,3}^{1}F_{1,2}^{0}F_{2,3}^{0} \text{ is a monomial,}\\\\
E_{2,3}^{0}E_{1,2}^{0}E_{2,3}^{2}\cdot \left(\begin{matrix}
3 && 0 \\
 & 3 &
 \end{matrix}\right) &\text{ has }& (0,0,2)\hspace{.5cm} \Rightarrow F_{2,3}^{2}F_{1,2}^{0}F_{2,3}^{0} \text{ is a monomial,}\\\\
E_{2,3}^{0}E_{1,2}^{1}E_{2,3}^{0}\cdot \left(\begin{matrix}
3 && 2 \\
 & 2 &
 \end{matrix}\right) &\text{ has }& (0,1,0)\hspace{.5cm} \Rightarrow F_{2,3}^{0}F_{1,2}^{1}F_{2,3}^{0} \text{ is a monomial,}\\\\
E_{2,3}^{0}E_{1,2}^{1}E_{2,3}^{1}\cdot \left(\begin{matrix}
3 && 1 \\
 & 2 &
 \end{matrix}\right) &\text{ has }& (0,1,1)\hspace{.5cm} \Rightarrow F_{2,3}^{1}F_{1,2}^{1}F_{2,3}^{0} \text{ is a monomial,}\\\\
 E_{2,3}^{0}E_{1,2}^{1}E_{2,3}^{2}\cdot \left(\begin{matrix}
3 && 0 \\
 & 2 &
 \end{matrix}\right) &\text{ has }& (0,1,2)\hspace{.5cm} \Rightarrow F_{2,3}^{2}F_{1,2}^{1}F_{2,3}^{0} \text{ is a monomial,}\\\\
E_{2,3}^{1}E_{1,2}^{2}E_{2,3}^{0}\cdot \left(\begin{matrix}
3 && 1 \\
 & 1 & 
 \end{matrix}\right) &\text{ has }& (1,2,0)\hspace{.5cm} \Rightarrow F_{2,3}^{0}F_{1,2}^{2}F_{2,3}^{1} \text{ is a monomial,}\\\\
E_{2,3}^{1}E_{1,2}^{2}E_{2,3}^{1}\cdot \left(\begin{matrix}
3 && 0 \\
 & 1 &
 \end{matrix}\right) &\text{ has }& (1,2,1)\hspace{.5cm} \Rightarrow F_{2,3}^{1}F_{1,2}^{2}F_{2,3}^{1} \text{ is a monomial,}\\\\
E_{2,3}^{2}E_{1,2}^{3}E_{2,3}^{0}\cdot \left(\begin{matrix}
3 && 0 \\
 & 0 &
 \end{matrix}\right) &\text{ has }& (2,3,0)\hspace{.5cm} \Rightarrow F_{2,3}^{0}F_{1,2}^{3}F_{2,3}^{2} \text{ is a monomial,}\\\\
E_{2,3}^{0}E_{1,2}^{1}E_{2,3}^{1}\cdot \left(\begin{matrix}
2 && 2 \\
 & 2 &
 \end{matrix}\right) &\text{ has }& (0,1,1)\hspace{.5cm} \Rightarrow F_{2,3}^{1}F_{1,2}^{1}F_{2,3}^{0} \text{ is a monomial,}\\\\
 E_{2,3}^{0}E_{1,2}^{1}E_{2,3}^{2}\cdot \left(\begin{matrix}
2 && 1 \\
 & 2 &
 \end{matrix}\right) &\text{ has }& (0,1,2)\hspace{.5cm} \Rightarrow F_{2,3}^{2}F_{1,2}^{1}F_{2,3}^{0} \text{ is a monomial,}\\\\
 \end{eqnarray*}
 \begin{eqnarray*}
E_{2,3}^{0}E_{1,2}^{1}E_{2,3}^{3}\cdot \left(\begin{matrix}
2 && 0 \\
 & 2 &
 \end{matrix}\right) &\text{ has }& (0,1,3) \hspace{.5cm}\Rightarrow F_{2,3}^{3}F_{1,2}^{1}F_{2,3}^{0} \text{ is a monomial,}\\\\
E_{2,3}^{1}E_{1,2}^{2}E_{2,3}^{1}\cdot \left(\begin{matrix}
2 && 1 \\
 & 1 &
 \end{matrix}\right) &\text{ has }& (1,2,1)\hspace{.5cm} \Rightarrow F_{2,3}^{1}F_{1,2}^{2}F_{2,3}^{1} \text{ is a monomial,}\\\\
E_{2,3}^{1}E_{1,2}^{2}E_{2,3}^{2}\cdot \left(\begin{matrix}
2 && 0 \\
 & 1 & 
 \end{matrix}\right) &\text{ has }& (1,2,2) \hspace{.5cm}\Rightarrow F_{2,3}^{2}F_{1,2}^{2}F_{2,3}^{1} \text{ is a monomial,}\\\\
E_{2,3}^{2}E_{1,2}^{3}E_{2,3}^{1}\cdot \left(\begin{matrix}
2 && 0 \\
 & 0 &
 \end{matrix}\right) &\text{ has }& (2,3,1)\hspace{.5cm} \Rightarrow F_{2,3}^{1}F_{1,2}^{3}F_{2,3}^{2} \text{ is a monomial.}
\end{eqnarray*}
So 
\begin{align*}
& \left\{  F_{2,3}^{0}F_{1,2}^{0}F_{2,3}^{0}, F_{2,3}^{1}F_{1,2}^{0}F_{2,3}^{0}, F_{2,3}^{2}F_{1,2}^{0}F_{2,3}^{0}, F_{2,3}^{0}F_{1,2}^{1}F_{2,3}^{0}, F_{2,3}^{1}F_{1,2}^{1}F_{2,3}^{0}, 
 F_{2,3}^{2}F_{1,2}^{1}F_{2,3}^{0},  \right. \\ 
 & \left.F_{2,3}^{0}F_{1,2}^{2}F_{2,3}^{1},
 F_{2,3}^{1}F_{1,2}^{2}F_{2,3}^{1}, F_{2,3}^{0}F_{1,2}^{3}F_{2,3}^{2}, F_{2,3}^{3}F_{1,2}^{1}F_{2,3}^{0},  F_{2,3}^{2}F_{1,2}^{2}F_{2,3}^{1},
F_{2,3}^{1}F_{1,2}^{3}F_{2,3}^{2}  \right\}
\end{align*}
 is a monomial basis.

So we see different sequences make different monomial basis.There are infinitely many monomials.
\end{exa}

\chapter{Conclusion}\label{6}
In this piece of work, our representation is actually 
\begin{align*}
\rho : sl_n & \longrightarrow \text{End}(R) \\
x & \longmapsto \rho(x).
\end{align*}
The map $\rho$ is linear and also the identity. We saw that the actions of upper triangular matrices and lower triangular matrices on a basis vector $\xi$ resulted in a new basis vector while the diagonal matrices act by a scaler. Suppose $v\in R$ and 
\begin{align*}
v=\lambda_1\xi_1+\cdots + \lambda_n\xi_n,
\end{align*}
where $\xi_1, \cdots , \xi_n$ are basis vectors and $\lambda_1, \cdots, \lambda_n$ are non-zero coefficients. Let $\lambda_1\neq 0$ and $\lambda_2=\cdots =\lambda_n$. Then
\begin{align*}
F\cdot v=\lambda_1F\cdot\xi_1
\end{align*}
and 
\begin{align*}
E\cdot(F\cdot v)=\lambda_1E\cdot(F\cdot\xi_1)
\end{align*}
are both well defined operations in our representation. Now, let $\lambda_1=\lambda_3=\cdots =\lambda_n$ and $\lambda_2\neq 0$. Then
\begin{align*}
E\cdot v=\lambda_2E\cdot\xi_2
\end{align*}
and 
\begin{align*}
F\cdot(E\cdot v)=\lambda_2F\cdot(E\cdot\xi_2)
\end{align*}
again are both well defined operations in our representation. The diagonal matrices act by a scaler; that is 
\begin{align*}
H\cdot \xi = \kappa \xi
\end{align*}
 and so the basis vector acted on remains the same. In all the actions above, the results are all accounted for in formulas of Equations \eqref{3.0.3}, \eqref{3.0.5} and \eqref{3.0.6}. If $\xi_1, \cdots , \xi_n\in S \Rightarrow S=R$. So, $\rho$ has no invariant subspace. Therefore, $\rho$ is an irreducible representation of special linear algebra, $sl_n$.

The $sl_n-$modules are very interesting as we study their actions on basis vectors. A fundamental component of this project is to show that $sl_n-$modules are actually simple and can form monomial basis. For any partition, we can construct all possible basis vectors and modules as explicitly explained in  Chapter \ref{1}. Because these bases vectors are complex combinatoric structures, we apply total ordering on them to identify the smallest basis vector. A sequence of upper triangular matrices that acts maximally on the smallest bases vector will eventually act on a set of bases vectors resulting in a total annihilation of all bases vectors but raising the smallest basis vector maximally. This is a very effective and efficient method of analysis. So, a maximally raised weight vector is then a highest weight vector. We proved that $\exists$ a highest weight vector and it has weight 
\begin{align*}
\omega_i = c_1\varepsilon_1 + \cdots + c_n\varepsilon_n.
\end{align*}  
We also proved in that every basis vector has a sequence of upper triangular matrices that acts on it maximally to yield a highest weight vector. Furthermore, suppose there were many weight vectors. There $\exists$ a sequence of upper triangular matrices whose action yields a highest weight vector. This is efficiently done by employing total order. We proved the existence of monomial basis and gave a construction. Each of these facts contributes in showing that $sl_n-$module is simple and has monomial basis.
\renewcommand{\bibname}{References}
\nocite{*}
\bibliographystyle{unsrt}
\bibliography{references}
\addcontentsline{toc}{chapter}{References}
\end{document}